\documentclass{amsart}
\usepackage{graphicx}
\usepackage{graphicx}
\usepackage{amssymb}
\usepackage{amsfonts}
\usepackage{amsmath}
\usepackage{amsthm}
\newtheorem{thm}{Theorem}[section]

\newtheorem{lem}[thm]{Lemma}
\newtheorem{rem}[thm]{Remark}

\numberwithin{equation}{section}

\newcommand{\cF}{\mathcal{F}}


\begin{document}
\title{Numerical schemes for $G$--Expectations}
 \thanks{I am very grateful to M.Nutz, H.M.Soner and O.Zeitouni for valuable discussions.}

 \author{Yan Dolinsky\\
 Department of Mathematics\\
 ETH, Zurich\\
 Switzerland }%

\address{
 Department of Mathematics, ETH Zurich 8092, Switzerland\\
 {e.mail: yan.dolinsky@math.ethz.ch}}

\date{\today}
\begin{abstract}
We consider a discrete time analog of $G$--expectations and
we prove that in the case where the time step goes to $0$
the corresponding values converge to the original
$G$--expectation. Furthermore we provide error estimates for
the convergence rate. This paper is continuation of \cite{DNS}. Our main tool is a strong approximation
theorem which we derive for general discrete time martingales.
\end{abstract}

\subjclass[2000]{60F15, 60G44, 91B24}%
\keywords{$G$-expectations, volatility uncertainty, strong approximation theorems}%

\maketitle
\markboth{Y.Dolinsky}{Approximations of $G$--Expectations}
\renewcommand{\theequation}{\arabic{section}.\arabic{equation}}
\pagenumbering{arabic}

\section{Introduction}\label{sec:1}\setcounter{equation}{0}
In this paper we study numerical schemes for $G$--expectations,
which were introduced recently by Peng (see \cite{P1} and \cite{P2}).
A $G$--expectation is a sublinear function which maps random
variables on the canonical space $\Omega:=C([0,T];\mathbb{R}^d)$ to the real numbers.
The motivation to study $G$--expectations comes from mathematical finance,
in particular from risk measures (see \cite{NS} and \cite{P3}) and pricing
under volatility uncertainty (see \cite{DM} ,\cite{NS} and \cite{STZ}).

Our starting point is the dual view on $G$--expectation via volatility uncertainty
(see \cite{DHP}), which yields the representation $\xi\rightarrow \sup_{P\in\mathcal{P}} E_P[\xi]$
where $\mathcal{P}$ is the set of probabilities on $C([0,T];\mathbb{R}^d)$
such that under any $P\in\mathcal{P}$, the canonical process $B$ is a martingale with volatility
$d\langle B\rangle/dt$ taking values in a compact convex subset $\textbf{D}\subset\mathbb{S}^d_{+}$
of positive definite matrices. Thus the set $\textbf{D}$ can be understood
as the domain of (Knightian) volatility uncertainty and the
functional above represents the European option (with reward $\xi$) super--hedging price. For details
see (\cite{DM} and \cite{NS}).

In the current work we assume that $\xi$ is of the form
$F(B,\langle B\rangle)$ where $F$ is a path--dependent functional which satisfies some regularity
conditions.
In particular, $\xi$ can represent an award of path dependent European contingent claim.
In this case the reward
is a functional of the stock price which is equal to the
Doolean exponential of the canonical process,
and so quadratic variation appears naturally.

In \cite{DNS}, the authors introduced a volatility uncertainty in
discrete time and an analog of the Peng $G$--expectation. They
proved that the discrete time values converge to the continuous time
$G$--expectation. The main tools that were used there are the weak
convergence machinery together with a randomization technique. The
main disadvantage of the weak convergence approach is that it can
not provide error estimates. In order to obtain error estimates we
should consider all the market models on the same probability space,
and so methods based on strong approximation theorems come into
picture. In this paper we consider a bit different (than in
\cite{DNS}) discrete time analog of $G$--expectation and prove that
in a case where the time step goes to $0$ the corresponding values
converge to the original $G$--expectation. Furthermore, by deriving
a strong invariance principle for general discrete time martingales,
we are able to provide error estimates for the convergence rate.

The paper is organized as following.
In the next section we introduce the setup and formulate the
main results. In Section 3 we present the main machinery which we are use, namely we obtain
a strong approximation theorem for general martingales.
In Section 4 we derive auxiliary lemmas that we use
for the proof of the main results. In Section 5 we complete
the proof of Theorems \ref{thm2.1}--\ref{thm2.2}.

\section{Preliminaries and main results}\label{sec:2}\setcounter{equation}{0}
We fix the dimension $d\in\mathbb{N}$ and denote by $||\cdot||$ the
sup Euclidean norm on $\mathbb{R}^d$. Moreover, we denote by
$\mathbb{S}^d$ the space of $d\times d$ symmetric matrices and by
$\mathbb{S}^d_{+}$ its subset of nonnegative definite matrices.
Consider the space $\mathbb{S}^d$ with the operator norm
$||A||=\sup_{||v||=1}||A(v)||$. We fix a nonempty, convex and
compact set $\textbf{D}\subset\mathbb{S}^d_{+}$; the elements of
$\textbf{D}$ will be the possible values of our volatility process.
Denote by $\Omega={C}([0,T];\mathbb{R}^d)$ and
$\Gamma={C}([0,T];\mathbb{S}^d)$, the spaces of continuous functions
with values in $\mathbb{R}^d$ and $\mathbb{S}^d$, respectively. We
consider these spaces with the sup norm $||x||=\sup_{0\leq t\leq
T}||x_t||.$ Let $F:\Omega\times\Gamma\rightarrow\mathbb{R}$ be a
function which satisfies the following assumption. There exits
constants $H_1,H_2>0$ such that
\begin{eqnarray}\label{2.1}
&|F(u_1,v_1)-F(u_2,v_2)|\leq H_1 \exp\left(H_2(||u_1||+||u_2||+||v_1||+||v_2||)\right)\times\\
&(||u_1-u_2||+||v_1-v_2||), \ \ u_1,u_2\in\Omega, \ \ v_1,v_2\in\Gamma.\nonumber
\end{eqnarray}
Without loss of generality
we assume that the maturity date $T=1$.
We denote by $B=(B_t)_{0\leq t\leq 1}$ the
canonical process (on the space $\Omega$) $B_t(\omega)=\omega_t$, $\omega\in\Omega$ and by
$\cF_t:=\sigma(B_s,\, 0\leq s\leq t)$ the canonical filtration.
A probability measure $P$ on $\Omega$ is called a \emph{martingale law}
if $B$ is a $P$-martingale (with respect to the filtration $\mathcal{F}_t$) and $B_0=0$ $P$-a.s. (all our martingales start at the origin).
We set
\begin{equation}\label{2.2}
\mathcal{P}_{\textbf{D}}=\{P \ \mbox{martingale} \ \mbox{law} \ \mbox{on} \ \Omega: \ d\langle B\rangle/ dt\in \textbf{D}, \ P\times dt \ \mbox{a.s.}\},
\end{equation}
observe that under any measure ${P}\in \mathcal{P}_D$
the stochastic processes $B$ and $\langle B \rangle$, are random elements in $\Omega$
and $\Gamma$, respectively.
Consider the $G$--expectation
\begin{equation}\label{2.4}
V=\sup_{{P}\in\mathcal{P}_{\textbf{D}}}{{E}_P}F(B,\langle B \rangle)
\end{equation}
where ${E}_P$ denotes the expectation with respect to $P$.
A measure $P\in \mathcal{P}_{\textbf{D}}$ will be called $\epsilon$--optimal if
\begin{equation}\label{2.5}
V<\epsilon+{E}_PF(B,\langle B \rangle).
\end{equation}
Our goal is to find discrete time approximations for $V$. The advantage of discrete time approximations
is that the corresponding values can be calculated by dynamical programming. Furthermore, we will apply these approximations
in order to find $\epsilon$--optimal measures in the continuous time setting.

\begin{rem}
Let
$S={\{(S^1_t,...,S^d_t)\}}_{t=0}^1$ be
the Doolean's exponential $\mathcal{E}(B)$ of the canonical process $B$, namely
$S^i_t=S^i_0\exp\left(B^i_t-\langle B^i\rangle_t\right)$, $i\leq d$, $t\in [0,1]$.
The stochastic process $S$
represents the stock prices in a financial model
with volatility uncertainty. Clearly any random variable
of the from $g(S)$ where $g:C([0,T];\mathbb{R}^d)\rightarrow\mathbb{R}_{+}$
is a Lipschitz continuous function, can be written in the form $g(S)=F(B,\langle B\rangle)$
for a suitable $F$ which satisfies (\ref{2.1}). Thus we see that our setup includes payoffs
which correspond to path dependent European options.
\end{rem}

Next, we formulate the main approximation results. Let $\nu$ be a distribution
on $\mathbb{R}^d$ which satisfies the following
\begin{equation}\label{2.6}
\int_{\mathbb{R}^d}xd\nu(x)=0 \ \ \mbox{and} \ \
\int_{\mathbb{R}^d}x^i x^j d\nu(x)=\delta_{ij}, \ \ 1\leq i,j\leq d
\end{equation}
where $\delta_{ij}$ is the Kronecker--Delta. Furthermore, we assume that
the moment generating function $\psi_{\nu}(y):=\int_{x\in\mathbb{R}^d}\exp(\sum_{i=1}^d x^iy^i)d\nu(x)<\infty$
exists for any $y\in\mathbb{R}^d$ and, for any compact set $K\subset\mathbb{R}^d$
\begin{equation}\label{2.6+}
\sup_{n\in\mathbb{N}}\sup_{y\in K}\psi^n_{\nu}\left(\frac{y}{\sqrt n}\right)<\infty.
\end{equation}
Observe that the standard $d$--dimensional normal distribution $\nu=N(0,I)$
is satisfying the assumptions (\ref{2.6})--(\ref{2.6+}).

Let $n\in\mathbb{N}$ and  $Y_1,...,Y_n$ be a sequence of i.i.d. random vectors
with $\mathcal{L}(Y_1)=\nu$, i.e., the distribution
of the random vectors is $\nu$. We denote by $\mathcal{A}^{\nu}_n$ the set of all $d$--dimensional
stochastic process $M=(M_0,...,M_n)$ of the form, $M_0=0$ and
\begin{equation}\label{2.7}
M_i=\sum_{j=1}^i\frac{1}{\sqrt n}\phi_j(Y_1,...,Y_{j-1})Y_j, \ \ 1\leq i\leq n
\end{equation}
where $\phi_j:(\mathbb{R}^{d})^{j-1}
\rightarrow\sqrt{\textbf{D}}:=
\{\sqrt A: A\in\textbf{D}\}$ and $Y_1,...,Y_n$ are column vectors. As usual for a matrix
$A\in\mathbb{S}^d_{+}$ we denote by $\sqrt{A}$ the unique square root in $\mathbb{S}^d_{+}$.
Observe that $M$ is a martingale under the filtration which is generated by $Y_1,...,Y_n$.
Let $\langle M \rangle $ be the ($\mathbb{S}^d_{+}$ valued)
predictable variation of $M$. In view of (\ref{2.6}) we get
\begin{equation}\label{2.8}
\langle M\rangle_k=\frac{1}{n}\sum_{j=1}^k \phi^2_j(Y_1,...,Y_{j-1}), \ \ 1\leq k \leq n
\end{equation}
and we set $\langle M\rangle_0=0$.
Let $\mathcal{W}_n:(\mathbb{R}^d)^{n+1}\times(\mathbb{S}^d)^{n+1}\rightarrow\Omega\times\Gamma$
be the linear interpolation operator given by
$$
\mathcal{W}_n(u,v)(t):=
\left(\left[{nt}\right]+1-{nt}\right)(u_{\left[nt\right]},v_{\left[nt\right]})+
\left(nt-\left[nt\right]\right)(u_{\left[nt\right]+1},v_{\left[nt\right]+1}),
 \ {t}\in[0,1]
$$
where $u=(u_0,u_1,...,u_n)$, $v=(v_0,v_1,...,v_n)$ and
$[z]$ denotes the integer part of $z$.
Set
\begin{equation}\label{2.9}
V^{\nu}_n=\sup_{M\in\mathcal{A}^{\nu}_n}\mathbb{E}F\left(\mathcal{W}_n(M,\langle M\rangle)\right),
\end{equation}
we denote by $\mathbb{E}$ the expectation with respect to the underlying probability measure.
The following theorem which will be proved in Section \ref{sec5} is the main result of the paper.
\begin{thm}\label{thm2.1}
For any $\epsilon>0$ there exists a constant $\mathcal{C}_{\epsilon}=\mathcal{C}_{\epsilon}(\nu)$
which depends only on the distribution $\nu$ such that
\begin{equation}\label{2.10}
|V^{\nu}_n-V|\leq \mathcal{C}_{\epsilon}n^{\epsilon-1/8}, \ \ \forall{n}\in\mathbb{N}.
\end{equation}
Furthermore, if the function $F$ is bounded, then there exists a constant $\mathcal{C}=\mathcal{C}(\nu)$
for which
\begin{equation}\label{2.10+}
|V^{\nu}_n-V|\leq \mathcal{C}n^{-1/8}, \ \ \forall{n}\in\mathbb{N}.
\end{equation}
\end{thm}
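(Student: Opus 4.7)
The plan is to prove matching upper and lower bounds for $V^{\nu}_n-V$, both resting on the strong martingale approximation theorem announced for Section 3. In both directions the strategy is the same: given a ``source'' martingale (discrete on one side, continuous on the other), build on an enlarged probability space a ``target'' martingale of the opposite type so that the two are pathwise close in both their sample path and their quadratic variation in an $L^2$ sense, and then transfer the expectation using the Lipschitz--exponential bound \eqref{2.1} together with Cauchy--Schwarz and an exponential moment control. Moment control is uniform in $n$ for the discrete martingales thanks to assumption \eqref{2.6+}, and automatic for the continuous ones because $\mathbf{D}$ is compact.

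For the upper bound $V^{\nu}_n\leq V+\cC_{\ve}n^{\ve-1/8}$, I would fix any $M\in\cA^{\nu}_n$ and invoke the strong approximation theorem of Section 3 to obtain, on an enriched probability space, a continuous $d$--dimensional martingale $\widetilde B$ with $d\langle\widetilde B\rangle/dt\in\mathbf{D}$ such that
$$
\mathbb{E}\bigl[||\cW_n(M,\langle M\rangle)-(\widetilde B,\langle\widetilde B\rangle)||^{2}\bigr]^{1/2}\leq C\,n^{-1/8}.
$$
Since the law of $\widetilde B$ lies in $\cP_{\mathbf{D}}$ we have $\mathbb{E}F(\widetilde B,\langle\widetilde B\rangle)\leq V$. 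Applying \eqref{2.1} together with H\"older with exponent $1+\ve$ to absorb the exponential factor, the difference between $\mathbb{E}F(\cW_n(M,\langle M\rangle))$ and $\mathbb{E}F(\widetilde B,\langle\widetilde B\rangle)$ is at most $\cC_{\ve}n^{\ve-1/8}$. Taking a supremum over $M\in\cA^{\nu}_n$ closes this side.

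For the lower bound $V\leq V^{\nu}_n+\cC_{\ve}n^{\ve-1/8}$, I would pick a $P\in\cP_{\mathbf{D}}$ with $E_PF(B,\langle B\rangle)\geq V-n^{-1/8}$, write the volatility of $B$ under $P$ as $\sigma_t\in\sqrt{\mathbf{D}}$, and construct $M\in\cA^{\nu}_n$ from i.i.d.\ $\nu$--vectors $Y_1,\ldots,Y_n$ on an auxiliary space by choosing $\phi_j(Y_1,\ldots,Y_{j-1})$ to be (essentially) the conditional average of $\sigma$ over $[(j-1)/n,j/n]$, expressed through the coupling furnished by the strong approximation theorem so that the past of the $Y_j$'s encodes the past of $B$; convexity and compactness of $\mathbf{D}$ ensure that after a measurable projection $\phi_j$ still takes values in $\sqrt{\mathbf{D}}$. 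The strong approximation bound then again controls $||B-\cW_n(M)||$ and $||\langle B\rangle-\cW_n(\langle M\rangle)||$ in $L^2$ by $Cn^{-1/8}$, and the transfer to $F$ is identical to the upper bound. When $F$ is bounded, the exponential factor in \eqref{2.1} can be dominated by a constant and no H\"older loss is needed, giving the sharper estimate \eqref{2.10+}.

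The principal obstacle, which is exactly what forces the new strong invariance result of Section 3, is that the coupling must simultaneously control the martingale \emph{and} its predictable quadratic variation, uniformly over all adapted volatilities $\phi_j\in\sqrt{\mathbf{D}}$. Classical strong approximation theorems (KMT, Sakhanenko, Zaitsev) cover only sums of independent variables with prescribed variances, whereas here the variance process is itself the unknown to be approximated; it is this joint approximation that dictates the exponent $1/8$. A secondary technical difficulty is the lower--bound construction: encoding a continuous adapted volatility as a deterministic function of a finite i.i.d.\ history while preserving both the $\sqrt{\mathbf{D}}$ constraint and the approximation rate requires a measurable selection argument, which is presumably where the auxiliary lemmas of Section 4 enter.
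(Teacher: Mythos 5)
Your high--level architecture (couple discrete and continuous martingales, transfer expectations through the Lipschitz--exponential bound \eqref{2.1} with a H\"older/Cauchy--Schwarz step that causes the $\epsilon$--loss, note that boundedness of $F$ removes that loss) matches the paper, and you correctly identify that the bottleneck is a strong invariance principle for martingales with adapted volatility. However, there are two concrete gaps. First, in the lower bound: the coupling of Lemma \ref{lem3.1} produces, from the sampled martingale $N_k=B_{k/n}$, an approximating process $M_k=\sum_{j\le k}\sqrt{\Delta\langle N\rangle_j}\,Y_j$ whose coefficients are functions of the \emph{$N$--path}, not of $Y_1,\dots,Y_{j-1}$; moreover the construction makes $Y_k$ \emph{independent} of $N_0,\dots,N_{k-1}$ and involves extra randomization, so the past of the $Y_j$'s cannot ``encode the past of $B$'' as you assert. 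Hence $M\notin\cA^{\nu}_n$ and no measurable projection fixes this. The paper circumvents it by a backward dynamic--programming comparison: defining $L_k$ as in \eqref{5.9} and showing $L_k\le J^{\nu,n}_k$ by induction, which yields $\tilde{E}F(\cW_n(M),\cW_n(\langle M\rangle))\le V^{\nu}_n$ even though $M$ is not an admissible control. Without this (or an equivalent device) your lower bound does not close.

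Second, in the upper bound: the strong approximation theorem does not produce a continuous martingale $\widetilde B$ with law in $\cP_{\textbf{D}}$ from a discrete $\nu$--driven one; it only couples two \emph{discrete} martingales. The paper's route is to take the \emph{optimal} control $M^{\nu,n}$, couple it via Lemma \ref{lem3.1} with a \emph{Gaussian}--driven discrete martingale (giving $V^{\nu}_n\le V^{\nu_g}_n+c\,n^{\epsilon-1/8}$ after another $L_k\le J_k$ comparison), and then observe that the Gaussian optimal control is \emph{exactly} the time--sampling of the continuous martingale $M^n$ of \eqref{2.15}, whose volatility is piecewise constant with values in $\sqrt{\textbf{D}}$ by the martingale representation theorem, so that $P_n\in\cP_{\textbf{D}}$ and $V^{\nu_g}_n\le V+c\,n^{-1/4}$. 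This Gaussian intermediary is the essential missing idea in your proposal; asserting the existence of $\widetilde B$ for an arbitrary $M\in\cA^{\nu}_n$ begs precisely the question the construction is designed to answer.
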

Next, we describe a dynamical programming algorithm for $V^{\nu}_n$
and for the optimal control, which in general should not be unique.
Fix $n\in\mathbb{N}$ and define a sequence of functions
$J^{\nu,n}_k:(\mathbb{R}^d)^{k+1}\times(\mathbb{S}^d)^{k+1}\rightarrow
\mathbb{R},\, k=0,1,..., n$ by the backward recursion
\begin{eqnarray}\label{2.11}
&J^{\nu,n}_n(u_0,u_1,...,u_n,v_0,v_1,...,v_n)=F\left(\mathcal{W}_n(u,v)\right) \  \ \mbox{and} \\
&J^{\nu,n}_k(u_0,u_1,...,u_k,v_0,v_1,...,v_k)=\nonumber\\
&\sup_{\gamma\in\sqrt{\textbf{D}}}
\mathbb{E}\left(J^{\nu,n}_{k+1}\left(u_0,u_1,...,u_k,u_k+\frac{\gamma
Y_{k+1}}{\sqrt n}
,v_0,v_1,...,v_k,v_k+\frac{\gamma^2}{n}\right)\right)\nonumber\\
&\sup_{\gamma\in\sqrt{\textbf{D}}}\int_{\mathbb{R}^d}
J^{\nu,n}_{k+1}\left(u_0,u_1,...,u_k,u_k+\frac{\gamma x}{\sqrt
n},v_0,v_1,...,v_k,v_k
+\frac{\gamma^2}{n}\right)d\nu(x)\nonumber\\
&
\ \mbox{for} \ \ k=0,1,...,n-1.\nonumber
\end{eqnarray}
From (\ref{2.1}) and (\ref{2.6+}) it follows that there exists a
constant $\hat{H}$ such that
$$J^{\nu,n}_{k}\left(u_0,...,u_k,v_0,...,v_k\right)\leq \hat{H}
\exp\big((H_2+1)\sum_{i=0}^k (||u_i||+||v_i||)\big), \ \ \forall{k},
u_0,...,u_k,v_0,...,v_k.$$ Fix $k$. By applying (\ref{2.6+}) again
we conclude that for any compact sets $K_1\subset\mathbb{R}^d$ and
$K_2\subset\mathbb{S}^d_{+}$, the family of random variables
\begin{eqnarray*}
&J^{\nu,n}_{k+1}\left(u_0,...,u_k,u_k+\frac{\gamma Y_{k+1}}{\sqrt n}
,v_0,...,v_k,v_k+\frac{\gamma^2}{n}\right),\\
&\gamma\in\sqrt{\textbf{D}}, \ \ u_0,...,u_k\in K_1, \ \ v_0,...,v_k\in K_2\nonumber
\end{eqnarray*}
is uniformly integrable. This together with the fact that the set
$\textbf{D}$ is compact gives (by backward induction) that for any
$k$, the function $J^{\nu,n}_k$ is continuous. Thus we can introduce
the functions
$h^{\nu,n}_k:(\mathbb{R}^d)^{k+1}\times(\mathbb{S}^d)^{k+1}\rightarrow\sqrt{\textbf{D}},\,
k=0,1,..., n-1$ by
\begin{eqnarray}\label{2.12}
&h^{\nu,n}_k(u_0,...,u_k,v_0,...,v_k)=\\
&arg \max_{\gamma\in \sqrt{\textbf{D}}} \int_{\mathbb{R}^d}
J^{\nu,n}_{k+1}\left(u_0,u_1,...,u_k,u_k+\frac{\gamma x}{\sqrt n}
,v_0,v_1,...,v_k,v_k+\frac{\gamma^2}{n}\right)d\nu(x)\nonumber.
\end{eqnarray}
Finally, define by induction the stochastic processes
${\{M^{\nu,n}_k\}}_{k=0}^n$ and ${\{N^{\nu,n}_k\}}_{k=0}^n$,
with values in $\mathbb{R}^d$ and $\mathbb{S}^d$, respectively by
$M^{\nu,n}_0=0$, $N^{\nu,n}_0=0$ and for $k<n$
\begin{eqnarray}\label{2.13}
&N^{\nu,n}_{k+1}=N^{\nu,n}_{k}+\frac{1}{n}\left(h^{\nu,n}_k(M^{\nu,n}_0,...,
M^{\nu,n}_k,N^{\nu,n}_0,...,N^{\nu,n}_k)\right)^2 \\
&\mbox{and} \ \ M^{\nu,n}_{k+1}=M^{\nu,n}_{k}+\frac{1}{\sqrt
n}h^{\nu,n}_k
(M^{\nu,n}_0,...,M^{\nu,n}_k,N^{\nu,n}_0,...,N^{\nu,n}_k)Y_{k+1}.\nonumber
\end{eqnarray}
Observe that $M^{\nu,n}\in\mathcal{A}^{\nu}_n$ and $N^{\nu,n}=\langle M^{\nu,n}\rangle$.
From the dynamical programming principle it follows that
\begin{equation}\label{2.14}
V^{\nu}_n=J^{\nu,n}_0(0,0)=\mathbb{E}F\left(\mathcal{W}_n(M^{\nu,n},\langle M^{\nu,n}\rangle)\right).
\end{equation}
In the following theorem (which will be proved in Section 5) we provide an explicit construction of
$\epsilon$--optimal measures for the $G$--expectation which is defined
in (\ref{2.4}).
\begin{thm}\label{thm2.2}
Let
$(\Omega^W, \mathcal{F}^{W}, \mathbb{P}^{W})$
be a complete probability space together with a standard
$d$-dimensional Brownian motion
\{$W_t\}_{t\in[0,1]}$ and its natural filtration
$\mathcal{F}^{W}_t=\sigma{\{W(s)|s\leq{t}\}}$.
Consider the standard normal distribution $\nu_g=\mathcal{N}(0,I)$.
For any $n\in\mathbb{N}$, let $f_n:(\mathbb{R}^d)^n\rightarrow\mathbb{R}$ be a function which is satisfying
$f_n(Y^g_1,...,Y^g_n)=M^{\nu_g,n}_n$,
where $Y^g_1,...,Y^g_n$ are i.i.d. and $\mathcal{L}(Y^g_1)=\nu_g$.
Observe that $f_n$ can be calculated from (\ref{2.11})--(\ref{2.13}).
Define the continuous stochastic process ${\{M^n_t\}}_{t=0}^1$ by
\begin{equation}\label{2.15}
M^n_t=\mathbb{E}^W\left(f_n\left(\sqrt{n}W_{\frac{1}{n}},\sqrt{n}(W_{\frac{2}{n}}-W_{\frac{1}{n}}),...,\sqrt{n}(W_1-
W_{\frac{n-1}{n}})\right)|\mathcal{F}^W_t\right), \ \ t\in [0,1]
\end{equation}
where $\mathbb{E}^W$ denotes the expectation with respect to
$\mathbb{P}^{W}$. Let $P_n$ be the distribution of $M^n$ on the
canonical space $\Omega$. Then $P_n\in\mathcal{P}_D$, and for any
$\epsilon>0$ there exists a constant
$\tilde{\mathcal{C}}_{\epsilon}$ such that
\begin{equation}\label{2.16}
V<E_n F(B,\langle B\rangle)+\tilde{\mathcal{C}}_{\epsilon} n^{\epsilon-1/8}, \ \ \forall{n}
\end{equation}
where $E_n$ denotes the expectation with respect to $P_n$.
If the function $F$ is bounded then there exists a constant
 $\tilde{\mathcal{C}}$ for which
 \begin{equation}\label{2.17}
V<E_n F(B,\langle B\rangle)+\tilde{\mathcal{C}} n^{-1/8}, \ \ \forall{n}.
\end{equation}
\end{thm}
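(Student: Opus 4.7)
The plan is to split the argument into two parts. First I would show that $P_n\in\cP_{\textbf{D}}$, which immediately gives $E_n F(B,\langle B\rangle)\leq V$. Second, I would compare $E_n F(B,\langle B\rangle)$ with the discrete optimum $V^{\nu_g}_n=\mathbb{E}F(\cW_n(M^{\nu_g,n},N^{\nu_g,n}))$ from \eqref{2.14}, which by Theorem \ref{thm2.1} differs from $V$ by at most $\cC_{\epsilon}n^{\epsilon-1/8}$. To prove $P_n\in\cP_{\textbf{D}}$ I use an It\^o representation of $M^n$: expanding \eqref{2.13} along the Brownian increments $Y^g_j:=\sqrt n(W_{j/n}-W_{(j-1)/n})$ yields
$$f_n(Y^g_1,\ldots,Y^g_n)=\sum_{j=1}^n h^{\nu_g,n}_{j-1}(\widetilde M_0,\ldots,\widetilde M_{j-1},\widetilde N_0,\ldots,\widetilde N_{j-1})(W_{j/n}-W_{(j-1)/n})=\int_0^1\sig^n_s\,dW_s,$$
where $\sig^n_s:=h^{\nu_g,n}_{j-1}(\cdots)$ for $s\in[(j-1)/n,j/n)$ is $\cF^W$--adapted and takes values in $\sqrt{\textbf{D}}$. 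Taking conditional expectations gives $M^n_t=\int_0^t\sig^n_s\,dW_s$, so $M^n$ is a continuous martingale with $d\langle M^n\rangle/dt=(\sig^n_t)^2\in\textbf{D}$, and these properties pass to the law $P_n$.

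The key observation for the second part is that, under this coupling, $M^n_{j/n}=M^{\nu_g,n}_j$ and $\langle M^n\rangle_{j/n}=N^{\nu_g,n}_j$ for every $j$. Moreover, since $\sig^n$ is constant on each interval $[(j-1)/n,j/n)$, the path $\langle M^n\rangle$ is already piecewise linear, hence $\langle M^n\rangle\equiv\cW_n(N^{\nu_g,n})$ identically in $t$. Thus the only discrepancy between $F(M^n,\langle M^n\rangle)$ and $F(\cW_n(M^{\nu_g,n},N^{\nu_g,n}))$ comes from $M^n$ following a scaled Brownian path on each subinterval while $\cW_n$ interpolates linearly. Pathwise, on $[(j-1)/n,j/n]$ the difference equals $\sig^n_{(j-1)/n}$ times a Brownian bridge of length $1/n$; combining the $n$ independent bridges via a standard maximal inequality and using the boundedness of $\sqrt{\textbf{D}}$ gives the sup-norm estimate $\sup_t\|M^n_t-\cW_n(M^{\nu_g,n},N^{\nu_g,n})_t\|=O(\sqrt{(\log n)/n})$ with sub-Gaussian tails.

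It remains to convert the pathwise estimate into an expectation estimate. Sub-Gaussian tails for $\|M^n\|$ (from the boundedness of $\sig^n$ via a standard exponential martingale bound) give uniform exponential moments, and a Cauchy--Schwarz argument combined with the Lipschitz-plus-exponential growth assumption \eqref{2.1} then yields
$$\bigl|\mathbb{E}F(M^n,\langle M^n\rangle)-\mathbb{E}F(\cW_n(M^{\nu_g,n},N^{\nu_g,n}))\bigr|\leq C\sqrt{(\log n)/n}=o(n^{\epsilon-1/8}).$$
Combining this with Theorem \ref{thm2.1} through the triangle inequality delivers \eqref{2.16}; \eqref{2.17} follows identically using the bounded version of Theorem \ref{thm2.1} and the fact that $\sqrt{(\log n)/n}\ll n^{-1/8}$. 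The main technical hurdle is ensuring that the exponential weight in \eqref{2.1} does not destroy the interpolation estimate; this reduces to verifying that exponential moments of $\|M^n\|$ are uniformly bounded in $n$, which I expect to obtain from a standard exponential martingale bound combined with the compactness of $\textbf{D}$.
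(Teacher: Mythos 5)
Your proposal is correct, and its overall architecture coincides with the paper's: identify $M^n$ as a stochastic integral $\int_0^t\sigma^n_s\,dW_s$ with a piecewise-constant, $\sqrt{\textbf{D}}$-valued, adapted integrand (the paper phrases this via the martingale representation theorem, you via direct expansion of \eqref{2.13} -- same computation), conclude $P_n\in\mathcal{P}_{\textbf{D}}$ and hence $E_nF\leq V$, then bound the gap between $E_nF(B,\langle B\rangle)$ and $V^{\nu_g}_n=\mathbb{E}F(\mathcal{W}_n(M^{\nu_g,n},\langle M^{\nu_g,n}\rangle))$ and invoke the rate of Theorem \ref{thm2.1}. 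The one genuinely different ingredient is the interpolation estimate: the paper reuses its general Lemma \ref{lem4.1} (Burkholder--Davis--Gundy bounds valid for every $P\in\mathcal{P}_{\textbf{D}}$), getting $c_4n^{-1/4}$ in \eqref{5.11}, whereas you exploit the special structure of $P_n$ -- exact equality $\langle M^n\rangle\equiv\mathcal{W}_n(\langle M^{\nu_g,n}\rangle)$ and a maximal inequality for $n$ independent scaled Brownian bridges -- to get the sharper $O(\sqrt{(\log n)/n})$; since both are swallowed by the $n^{\epsilon-1/8}$ coming from Theorem \ref{thm2.1}, neither route changes the final rate, but yours avoids Lemma \ref{lem4.1} entirely for this step. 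One logical point worth making explicit in your write-up: you may only cite the \emph{lower-bound} half of Theorem \ref{thm2.1}, namely $V^{\nu_g}_n\geq V-\mathcal{C}_\epsilon n^{\epsilon-1/8}$ (inequality \eqref{5.1}, proved independently via Lemma \ref{lem3.1}), because in the paper the upper-bound half of Theorem \ref{thm2.1} is itself deduced from the very construction of $P_n$ you are analyzing; fortunately the lower bound is exactly the half your argument uses, so there is no circularity.
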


\section{The main tool}\label{sec3}\setcounter{equation}{0}
In this section we derive a strong approximation theorem (Lemma
\ref{lem3.1}) which is the main tool in the proof of Theorems
\ref{thm2.1}--\ref{thm2.2}. This theorem is an extension of the main
result in \cite{S}.

For any two distributions $\nu_1,\nu_2$ on the same measurable space
$(\mathcal{X},\mathcal{B})$ we define the distance in variation
\begin{equation}\label{3.1}
\rho(\nu_1,\nu_2)=\sup_{B\in\mathcal{B}}|\nu_1(B)-\nu_2(B)|.
\end{equation}
First we state some results (without a proof) from \cite{S} (Lemmas 4.5 and 7.2 in \cite{S}) that will be
used in the proof of Lemma \ref{lem3.1}.
\begin{lem}\label{lem3.0}
${}$\\
i. There exists a distribution $\mu$ on $\mathbb{R}^d$ which is
supported on the set $(-1/2,1/2)^d$ and has the following property.
There exists a constant $\mathcal{C}_1>0$ such that for any
distributions $\nu_1,\nu_2$ on $\mathbb{R}^d$ which satisfy
\begin{eqnarray}\label{3.2}
&\int_{\mathbb{R}^d}xd\nu_1(x)=\int_{\mathbb{R}^d}xd\nu_2(x) \ \ \mbox{and} \ \ \mbox{for} \ \ 1\leq i,j\leq d\\
&\int_{\mathbb{R}^d}x^i x^j d\nu_1(x)=\int_{\mathbb{R}^d}x^i x^j d\nu_2(x) \nonumber
\end{eqnarray}
we have
\begin{equation}\label{3.3}
\rho(\nu_1\ast \mu,\nu_2\ast\mu)\leq \mathcal{C}_1 \left(\int_{\mathbb{R}^d}||x||^3
d\nu_1(x)+\int_{\mathbb{R}^d}||x||^3d\nu_2(x)\right)
\end{equation}
where $\nu\ast\mu$ denotes the convolution of the measures $\nu$ and $\mu$.\\
ii. Let $(\tilde{\Omega},\tilde{\mathcal{F}},\tilde{P})$ be a probability space
together with a $d$--dimensional random vector $Y$, a $m$--dimensional random vector $Z$ ($m$ is some natural number),
and a random variable $\alpha$ which is distributed uniformly on the interval $[0,1]$
and independent of $Y$ and $Z$. Let $\nu$ be a distribution on $\mathbb{R}^d$ and
let $\hat\nu$ be a distribution on $\mathbb{R}^{m}\times\mathbb{R}^d$
such that $\hat\nu(A\times\mathbb{R}^d)=\tilde{P}(Z\in A)$ for
any $A\in\mathcal{B}(\mathbb{R}^m)$, i.e.
a marginal distribution of $\hat\nu$ on $\mathbb{R}^m$ is equals
to $\mathcal{L}(Z)$.
There exists a measurable
function $\Phi=\Phi_{\nu,\hat\nu,\mathcal{L}(Z,Y)}:
\mathbb{R}^m\times\mathbb{R}^d\times [0,1]\rightarrow
\mathbb{R}^d\times\mathbb{R}^d$ such that for the vector
\begin{equation}\label{3.4}
(U,X):=\Phi(Z,Y,\alpha)
\end{equation}
we have the following:
$\mathcal{L}(U)=\nu$, $\mathcal{L}(Z,X)=\hat\nu$, $U$ is independent of $X,Z$
and
\begin{equation}\label{3.6}
\tilde{P}(U+X \neq Y|Z)=\rho(\mathcal{L}(U)
\ast\mathcal{L}(X|Z),\mathcal{L}(Y|Z)).
\end{equation}
\end{lem}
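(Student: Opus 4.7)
The plan is to prove the two parts of Lemma \ref{lem3.0} independently: part (i) is a quantitative smoothing estimate and part (ii) is a measurable realisation of the classical maximal coupling.

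For part (i), I would choose $\mu$ to be the distribution on $\mathbb{R}^d$ with a fixed smooth symmetric density $p\in C_c^\infty((-1/2,1/2)^d)$ normalised so that $\int p=1$. Then $\nu_i\ast\mu$ has a smooth density $h_i(x)=\int p(x-y)\,d\nu_i(y)$, and $\rho(\nu_1\ast\mu,\nu_2\ast\mu)=\tfrac{1}{2}\int|h_1(x)-h_2(x)|\,dx$. A second order Taylor expansion of $p(x-y)$ in $y$ around $y=0$,
\[ p(x-y)=p(x)-\sum_i y^i\partial_i p(x)+\tfrac{1}{2}\sum_{i,j}y^i y^j \partial_{ij}p(x)+R(x,y), \]
has integral remainder satisfying $|R(x,y)|\le \tfrac{1}{6}\|y\|^3\max_{s\in[0,1]}\sum_{i,j,k}|\partial_{ijk}p(x-sy)|$. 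Because $\nu_1$ and $\nu_2$ share their zeroth, first and second moments by (\ref{3.2}), integrating the expansion against $\nu_1-\nu_2$ annihilates the first three terms, leaving $h_1(x)-h_2(x)=\int R(x,y)\,(d\nu_1-d\nu_2)(y)$. Applying Fubini and translation invariance of Lebesgue measure then yields
\[ \int|h_1-h_2|\,dx\le \tfrac{1}{6}\Big(\sum_{i,j,k}\|\partial_{ijk}p\|_{L^1}\Big)\int\|y\|^3\,d(\nu_1+\nu_2)(y), \]
which gives (\ref{3.3}) once $\mathcal{C}_1$ is chosen accordingly.

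For part (ii), I would first fix a regular conditional distribution $\hat\nu(dx\,|\,z)$ of $\hat\nu$ given its first marginal; this exists because the spaces are Polish. For each $z$ the target is to construct a coupling between $\mathcal{L}(U)\ast\hat\nu(\cdot\,|\,z)$, the conditional law of $U+X$, and $\mathcal{L}(Y\,|\,Z=z)$ that places as much mass as possible on the diagonal; the classical maximal coupling achieves precisely $1$ minus the total variation distance, which is exactly (\ref{3.6}). I would realise this coupling via the Hahn decomposition of the signed measure $\mathcal{L}(U)\ast\hat\nu(\cdot\,|\,z)-\mathcal{L}(Y\,|\,Z=z)$: on an event of conditional probability $1-\rho(\mathcal{L}(U)\ast\mathcal{L}(X|Z),\mathcal{L}(Y|Z))$ one samples the pair $(U,X)$ from the common lower envelope, forcing $U+X=Y$, while on the complementary event of probability $\rho$ one draws $U\sim\nu$ and $X\sim\hat\nu(\cdot\,|\,z)$ independently from the residual measures. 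The uniform random variable $\alpha$, independent of $(Y,Z)$, supplies the single source of extra randomness needed to execute both the Bernoulli decision and the subsequent draws through inverse-CDF transformations.

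The main obstacle is the joint measurability of $\Phi$ in part (ii): one must guarantee that the construction depends on $(z,y)$ in a jointly Borel manner. This follows from standard measurable disintegration on Polish spaces together with the Borel measurability of inverse distribution functions of parametric families of probability measures, but spelling it out carefully is where most of the real work lies. Part (i), by contrast, is conceptually routine once $\mu$ has been chosen smooth enough; the only point to verify is the finiteness of the third derivative $L^1$ norms, which is automatic for any compactly supported $C^\infty$ bump.
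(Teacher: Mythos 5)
First, note that the paper does not prove this lemma at all: it is quoted without proof from Sakhanenko \cite{S} (Lemmas 4.5 and 7.2 there), so there is no in-paper argument to compare against; what follows judges your reconstruction on its own merits. Part (i) is essentially right. The one slip is that you bound the Taylor remainder by $\tfrac16\|y\|^3\max_{s\in[0,1]}\sum_{i,j,k}|\partial_{ijk}p(x-sy)|$ and then integrate in $x$: the $\max_s$ inside the $x$--integral does not reduce to $\|\partial_{ijk}p\|_{L^1}$ by translation invariance (for large $\|y\|$ it costs an extra factor of order $\|y\|$, which would force a fourth moment). Keep the remainder in its integral form $\tfrac12\int_0^1(1-s)^2\sum_{i,j,k}y^iy^jy^k\,\partial_{ijk}p(x-sy)\,ds$, apply Fubini in $(x,s)$, and use translation invariance for each fixed $s$; this yields exactly the constant you claim.

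Part (ii) has the right architecture (conditional maximal coupling, realized measurably from the auxiliary uniform $\alpha$ via disintegration on Polish spaces), but the description of the coupling itself contains a genuine gap. The maximal coupling you invoke lives on the space of the \emph{sum} $S=U+X$: the common part $P\wedge Q$ and the residuals are measures on $\mathbb{R}^d$, with $P=\nu\ast\hat\nu(\cdot\,|\,z)$ and $Q=\mathcal{L}(Y|Z=z)$. The lemma, however, requires you to output the \emph{pair} $(U,X)$ whose conditional law given $Z=z$ is exactly the product $\nu\otimes\hat\nu(\cdot\,|\,z)$ --- this product structure is precisely what encodes $\mathcal{L}(U)=\nu$, $\mathcal{L}(Z,X)=\hat\nu$ and the independence of $U$ from $(X,Z)$. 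Sampling ``$U\sim\nu$ and $X\sim\hat\nu(\cdot|z)$ independently from the residual measures'' on the disagreement branch, and ``the pair from the common lower envelope'' on the agreement branch, is not well defined as written and, naively interpreted, fails: the residual of the convolution is not the convolution of residuals, and the lower envelope is a measure on sums, not on pairs. The missing ingredient is a disintegration of $\nu\otimes\hat\nu(\cdot\,|\,z)$ along the addition map, i.e.\ kernels $\kappa_s(du,dx)$ with $\nu\otimes\hat\nu(\cdot|z)=\int\kappa_s\,dP(s)$; given $Y=y$ one accepts with conditional probability $\frac{d(P\wedge Q)}{dQ}(y)$ and draws $(U,X)\sim\kappa_y$ (forcing $U+X=Y$), and otherwise draws $s$ from the normalized residual $(P-P\wedge Q)/\rho$ and then $(U,X)\sim\kappa_s$. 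Only with this extra layer do both the conditional product law of $(U,X)$ and the exact disagreement probability \eqref{3.6} come out, and it is also where the joint Borel measurability in $(z,y,\alpha)$ that you correctly flag actually has to be carried out.
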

Now we are ready to prove the main result of this section. For any
stochastic process $Z=\{Z_k\}_{k=0}^n$ we denote  $\Delta
Z_k:=Z_k-Z_{k-1}$ for $k\geq 0$, where we set $Z_{-1}=0$. Fix
$n\in\mathbb{N}$ and consider a $d$--dimensional martingale
${\{M_k\}}_{k=0}^n$ with respect to its natural filtration, which
satisfies $M_0=0$. For any $1\leq k\leq n$, let
$\phi_k:(\mathbb{R}^{d})^k\rightarrow\mathbb{S}^d$ be a measurable
map such that
\begin{eqnarray}\label{3.6+-}
&\sqrt{\Delta\langle M\rangle_k}=\sqrt{\mathbb{E}\left(\Delta
M_k\Delta
M_k^{'}\big|\sigma\{M_0,M_1,...,M_{k-1}\}\right)}= \\
&\phi_k(\Delta M_0,\Delta M_1,...,\Delta M_{k-1}),\nonumber
\end{eqnarray}
where $\{\langle M\rangle_k\}_{k=0}^n$ is the predictable variation
($\mathbb{S}^d_{+}$ valued) of $M$ and the symbol $\cdot^{'}$
denotes transposition. We assume that there exists a constant
 $H$ for which
\begin{equation}\label{3.6+}
\mathbb{E}\left(||\Delta
M_k||^3\big|\sigma\{M_0,...,M_{k-1}\}\right)+ ||\sqrt{\Delta\langle
M\rangle_k}||^3 \leq H, \ \ a.s. \ \ \forall{k}.
\end{equation}
\begin{lem}\label{lem3.1}
Let $\nu$ a distribution on
$\mathbb{R}^d$ such that
\begin{eqnarray}\label{3.7}
&\int_{\mathbb{R}^d}x d\nu(x)=0, \ \
\int_{\mathbb{R}^d}x^i x^j d\nu(x)=\delta_{ij} \ \ \forall{i,j\leq d}\\
&\mbox{and} \ \ \int_{\mathbb{R}^d}||x||^3 d\nu(x)<\infty.\nonumber
\end{eqnarray}
For any $\Theta>0$ its possible to construct the martingale
${\{M_k\}}_{k=0}^n$ on some probability space $(\tilde\Omega,\tilde{\mathcal{F}},\tilde{P})$
together with a sequence
of i.i.d. random vectors $Y_1,...,Y_n$ with the following properties:\\
i. $\mathcal{L}(Y_1)=\nu$.\\
ii. For any $k$, the random vectors $M_1,...,M_{k-1}$ are independent of $Y_k$.\\
iii. There exists a constant $\mathcal{C}_2=\mathcal{C}_2(\nu)$ which depends only on the distribution $\nu$ such that
\begin{equation}\label{3.9}
\tilde{P}\left(\max_{1\leq k\leq n} ||M_k-\sum_{j=1}^{k}
\sqrt{\Delta\langle M\rangle_j}Y_{j}||>\Theta\right)\leq
\frac{\mathcal{C}_2H n}{\Theta^3}.
\end{equation}
\end{lem}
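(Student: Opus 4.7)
The plan is to construct the martingale $(M_k)_{k=0}^n$ and the i.i.d.\ sequence $(Y_k)_{k=1}^n$ simultaneously on a large enough probability space by induction on $k$, using the coupling of Lemma \ref{lem3.0}(ii) together with the smoothing estimate of Lemma \ref{lem3.0}(i) at each step. Concretely, I would enlarge $(\tilde\Omega,\tilde{\mathcal{F}},\tilde P)$ to carry independent auxiliary sequences $\alpha_k\sim\mathrm{Uniform}[0,1]$ and $V_k,\tilde V_k\sim\mu$ for $k=1,\ldots,n$, where $\mu$ is the smoothing distribution from Lemma \ref{lem3.0}(i). At step $k$, with $(M_0,\ldots,M_{k-1},Y_1,\ldots,Y_{k-1})$ already in hand with the prescribed joint law, I would apply Lemma \ref{lem3.0}(ii) with $Z$ encoding the available history (including the past $V_j,\tilde V_j$), with $Y$ equal to $\Delta M_k^{0}+V_k$ for a freshly drawn $\Delta M_k^{0}$ from the prescribed conditional law of $\Delta M_k$, with target law for the $U$-coordinate equal to $\mu$, and with target joint law for $(Z,X)$ chosen so that conditionally on $Z$ the coordinate $X$ is distributed as $\sqrt{\Delta\langle M\rangle_k}\,Y_k$ for a $\nu$-distributed $Y_k$ independent of the history.

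Because the conditional laws of $\Delta M_k^{0}$ and of $\sqrt{\Delta\langle M\rangle_k}\,Y_k$ given the history both have mean $0$ and covariance $\Delta\langle M\rangle_k$, by (\ref{3.7}) and (\ref{3.6+-}), Lemma \ref{lem3.0}(i) bounds the variation distance of their convolutions with $\mu$ by $C_1$ times the sum of their conditional third moments, which is $O(H)$ by (\ref{3.6+}) and the third-moment hypothesis on $\nu$. On the success event of this coupling one obtains the identity $V_k+\sqrt{\Delta\langle M\rangle_k}\,Y_k=\Delta M_k+\tilde V_k$, so that the error $D_k:=\Delta M_k-\sqrt{\Delta\langle M\rangle_k}\,Y_k$ equals $\tilde V_k-V_k$, bounded by the diameter of the support of $\mu$; on the complementary event $D_k$ still has bounded conditional third moment of order $H$ by (\ref{3.6+}) and the moment hypothesis on $\nu$. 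A direct check shows that $(D_k)$ is a martingale difference sequence with respect to the joint filtration: indeed, $\tilde{\mathbb{E}}[\Delta M_k\mid\mathrm{past}]=0$ by the martingale property, while $\tilde{\mathbb{E}}[\sqrt{\Delta\langle M\rangle_k}\,Y_k\mid\mathrm{past}]=\sqrt{\Delta\langle M\rangle_k}\,\tilde{\mathbb{E}}[Y_k]=0$ by construction and (\ref{3.7}). Hence the quantity to be controlled is precisely $S_k:=\sum_{j\le k}D_j$.

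The final step is the maximal inequality. I would split $\{\max_{k\le n}\|S_k\|>\Theta\}$ into the subset where every coupling succeeded and its complement. On the success subset, $(D_k)$ is a bounded martingale difference sequence, so Doob's inequality applied to $\|S_k\|^3$ together with a Bernstein or Hoeffding type sub-Gaussian tail bound yields a contribution that can be absorbed into $C_2Hn/\Theta^3$; on the complement one uses a combination of a union bound on the per-step failure events and a truncation argument, separating ``large'' martingale increments (controlled by Markov from the third-moment hypothesis) from the ``typical'' ones. \emph{The main obstacle} is obtaining the sharp linear-in-$n$ rate $Hn/\Theta^{3}$: a naive application of Burkholder--Rosenthal to $(S_k)$ only yields $\tilde{\mathbb{E}}\|S_n\|^{3}=O(n^{3/2})$ and hence the weaker $n^{3/2}/\Theta^{3}$, so the improvement rests on exploiting the two-regime structure of $D_k$---bounded and sub-Gaussian on the success event, heavy-tailed but rare on the failure event---which is exactly the martingale analogue of the core technical step in \cite{S}.
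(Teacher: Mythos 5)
Your construction goes wrong at the point where you introduce \emph{fresh, independent} smoothing variables $V_k,\tilde V_k\sim\mu$ at every step. With that choice, even on the event that every single coupling succeeds, the accumulated error is $S_k=\sum_{j\le k}(\tilde V_j-V_j)$ (after the appropriate $\Theta$--rescaling), i.e.\ a random walk of $k$ independent bounded mean--zero terms of typical size $\Theta\sqrt{k}$. This is far larger than the target threshold $\Theta$, so the all--success event gives you nothing, and no Doob/Bernstein/Burkholder--Rosenthal argument can recover the bound $\mathcal{C}_2Hn/\Theta^3$ --- the obstacle you flag at the end is not a technical sharpening issue but a genuine failure of the decomposition. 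The paper's proof avoids this entirely by \emph{chaining} the smoothing variables: at step $k$ the coupling of Lemma \ref{lem3.0}(ii) is applied with the smoothed input $U_{k-1}+\frac{1}{\Theta}\phi_k(\Theta X_0,\dots,\Theta X_{k-1})Y_k$, where $U_{k-1}$ is the $U$--output of the \emph{previous} step (law $\mu$, independent of the past $X$'s), and the output pair is $(U_k,X_k)$. Writing $\delta_k=U_k+X_k-U_{k-1}-\frac{1}{\Theta}\phi_k(\cdots)Y_k$, the sum telescopes:
\begin{equation*}
\frac{1}{\Theta}\Bigl(M_k-\sum_{j=1}^{k}\sqrt{\Delta\langle M\rangle_j}\,Y_j\Bigr)=\sum_{i=1}^{k}\delta_i+U_0-U_k ,
\end{equation*}
and since $\|U_0-U_k\|<1$ deterministically (the support of $\mu$ lies in $(-1/2,1/2)^d$), the event $\max_k\|M_k-\sum_j\sqrt{\Delta\langle M\rangle_j}Y_j\|>\Theta$ forces some $\delta_i\neq 0$. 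The estimate (\ref{3.9}) is then a pure union bound over the $n$ per--step failure probabilities, each bounded by $\mathcal{C}_2H/\Theta^3$ via Lemma \ref{lem3.0}(i); no maximal inequality is used or needed.

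Your per--step variation--distance estimate is essentially right (comparing the conditional law of $\Delta M_k$ with that of $\sqrt{\Delta\langle M\rangle_k}\,Y_k$ after convolving with $\mu$, using matching first and second conditional moments and the third--moment bound $H$), and so is the measure--theoretic bookkeeping for properties (i)--(ii). But to complete the proof you must replace the independent $V_k,\tilde V_k$ by the recycled chain $U_0,U_1,\dots,U_n$ so that the errors cancel rather than accumulate; this chaining is the core of Sakhanenko's method and the one idea your write--up is missing.
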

\begin{proof}
Fix $\Theta>0$. For any $k$ let $\nu_k$ be the distribution of the
random vector $\frac{1}{\Theta}(\Delta M_0,...,\Delta M_k)$. Let
$(\tilde\Omega,\tilde{\mathcal{F}},P)$ be a probability space which
contains a sequence of i.i.d. random vectors $Y_1,...,Y_n$ such that
$\mathcal{L}(Y_1)=\nu$, a sequence of i.i.d. random variables
$\alpha_1,...,\alpha_n$ which are distributed uniformly on the
interval $[0,1]$ and independent of $Y_1,...,Y_n$, and a random
vector $U_0$ which is independent of
$Y_1,...,Y_n,\alpha_1,...,\alpha_n$ and satisfies
$\mathcal{L}(U_0)=\mu$, where the distribution $\mu$ is defined in
the first part of Lemma \ref{lem3.0}. Define the sequences
${\{X_i\}}_{i=0}^n$ and ${\{U_i\}}_{i=1}^n$ by the following
recursive relations, $X_0=0$ and
\begin{equation}\label{3.11}
(U_k,X_k)=\Psi_{\mu,\nu_k,\hat\nu_k}(X_0,...,X_{k-1},U_{k-1}+\frac{1}{\Theta}
\phi_k(\Theta X_0,...,\Theta X_{k-1})Y_k,\alpha_k), \ \ 1\leq k\leq n
\end{equation}
where $\hat\nu_k$ is the distribution of $(X_0,...,X_{k-1},U_{k-1}+\frac{1}{\Theta}\phi_k(\Theta X_0,...,\Theta X_{k-1})Y_k)$.
From the definition of the map $\Psi$ it follows (by induction) that $\mathcal{L}(\Theta X_0,...,\Theta X_n)=
\mathcal{L}(\Delta M_0,...,\Delta M_n)$.
We conclude that the the stochastic process ${\Theta}\sum_{i=0}^k X_i$, $0\leq k\leq n$
is distributed as ${\{M_k\}}_{k=0}^n$, and so we set,
\begin{equation}\label{3.12}
M_k={\Theta}\sum_{i=0}^k X_i, \ \ 0\leq k\leq n.
\end{equation}
Let $1\leq k\leq n$. From (\ref{3.11})--(\ref{3.12}) and the fact that $Y_k$ is independent of
$Y_1,...,Y_{k-1},$
$\alpha_1,...,\alpha_{k-1}$ it follows that
$Y_k$ is independent of $M_0,...,M_{k-1}$.
Thus in order to complete the proof, it remains to establish (\ref{3.9}).
Set
\begin{eqnarray}\label{3.13}
&\delta_k=U_k+X_k-U_{k-1}-\frac{1}{\Theta}\phi_k(\Theta X_0,...,\Theta X_{k-1})Y_k, \  \mbox{and} \
\rho_k(x_0,...,x_{k-1})\\
&=\tilde{P}(\delta_k \neq 0|X_0=x_0,...,X_{k-1}=x_{k-1}), \ \ x_0,...,x_{k-1}\in\mathbb{R}^d \ \ 1\leq k\leq n.\nonumber
\end{eqnarray}
From the properties of the map $\Psi$ it follows that for any $k$,
$U_k$ is independent of $X_0,...,X_k$ and $\mathcal{L}(U_k)=\mu$.
This together with (\ref{3.6}) and (\ref{3.11}) gives
\begin{eqnarray}\label{3.15}
&\rho_k(x_0,...,x_{k-1})=\rho\big(\mathcal{L}(X_k|X_0=x_0,...,X_{k-1}=x_{k-1})\ast \mu,\\
&\mathcal{L}(\frac{1}{\Theta}\phi(\Theta x_0,...,\Theta x_{k-1})Y_k)\ast \mu\big) \ \ x_0,...,x_{k-1}\in\mathbb{R}^d, \ \
1\leq k\leq n. \nonumber
\end{eqnarray}
From (\ref{3.3}), (\ref{3.6+-})--(\ref{3.6+}), (\ref{3.12}) and
(\ref{3.15})
\begin{equation}\label{3.16}
\rho_k(x_0,...,x_{k-1})\leq \frac{\mathcal{C}_2H}{\Theta^3},\ \ x_0,...,x_{k-1}\in\mathbb{R}^d, \ \
1\leq k\leq n
\end{equation}
for some constant $\mathcal{C}_2=\mathcal{C}_2(\nu)$ which depends only on the distribution $\nu$.
From (\ref{3.12})--(\ref{3.13}), (\ref{3.16}) and the fact that $\max_{0\leq k\leq n} ||U_k||<\frac{1}{2}$ a.s. we obtain
\begin{eqnarray*}
&\tilde{P}\left(\max_{1\leq k\leq n} ||M_k-\sum_{j=1}^{k}
\sqrt{\Delta\langle M\rangle_j}Y_{j}||>\Theta\right)=\\
&\tilde{P}\left(\max_{1\leq k\leq n} ||M_k-\sum_{j=0}^{k-1} \phi_j(\Delta M_0,...,\Delta M_j)Y_{j+1}||>\Theta\right)=\\
&\tilde{P}\left(\max_{1\leq k\leq n} \Theta||\sum_{i=1}^k\delta_i+U_0-U_k||>\Theta\right)\leq
\sum_{i=1}^n \tilde{P}(\delta_i\neq 0)\leq \frac{\mathcal{C}_2H n}{\Theta^3}
\end{eqnarray*}
and we conclude the proof.
\end{proof}

\section{Auxiliary lemmas}\label{sec4}\setcounter{equation}{0}
In this section we derive several estimates which are
essential for the proof of Theorem \ref{thm2.1}--\ref{thm2.2}.
We start with the following general result.
\begin{lem}\label{lem4.0+}
Let ${\{M_t\}}_{t=0}^1$ be a one dimensional continuous martingale which satisfies
$\frac{d\langle M\rangle_t }{dt}\leq \mathcal{H}$ a.s. for some constant $\mathcal{H}$.
Consider the discrete time martingale $N_k=M_{k/n}$, $0 \leq k\leq n$
together with its predictable variation process ${\{\langle N\rangle_k\}}_{k=0}^n$ which is given by
$\langle N\rangle _0=0$ and
$$\langle N\rangle _k=\sum_{i=1}^k \mathbb{E}((\Delta N_i)^2|\sigma\{N_0,...,N_{i-1}\}), \ \ 1\leq k\leq n.$$
There exists constants $\mathcal{C}_3,\mathcal{C}_4$ (which depend only on $\mathcal{H}$ such that)
\begin{equation}\label{4.0+}
\mathbb{E}\left(\max_{0\leq k\leq n-1}\max_{k/n\leq t\leq (k+1)/n}|M_t-N_{k}|^4\right)\leq \frac{\mathcal{C}_3}{n}
\end{equation}
and
\begin{equation}\label{4.0++}
\mathbb{E}\left(\max_{0\leq k\leq n-1}\max_{k/n\leq t\leq (k+1)/n}|\langle M\rangle _t-\langle N\rangle_k |^2\right)
\leq \frac{\mathcal{C}_4}{\sqrt n}.
\end{equation}
\end{lem}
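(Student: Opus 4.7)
The plan is to treat both estimates by slab-by-slab BDG/Doob applied to $M$ or to an auxiliary discrete martingale, and then bound the maximum over the $n$ slabs $[k/n,(k+1)/n]$ by a sum. The Lipschitz bound $d\langle M\rangle/dt\le\mathcal{H}$ is what makes each per-slab contribution of size $1/n^{2}$, so that summing $n$ of them yields the $1/n$ order.

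For (\ref{4.0+}), on each slab the process $t\mapsto M_t-M_{k/n}$ is a continuous martingale whose quadratic variation is at most $\mathcal{H}/n$ pointwise, so $L^{4}$--BDG gives
\begin{equation*}
\mathbb{E}\Bigl(\sup_{k/n\le t\le(k+1)/n}|M_t-N_k|^{4}\Bigr)\le C\,\mathbb{E}\bigl((\langle M\rangle_{(k+1)/n}-\langle M\rangle_{k/n})^{2}\bigr)\le C\mathcal{H}^{2}/n^{2}.
\end{equation*}
Bounding the maximum of $n$ nonnegative random variables by their sum then produces the desired $\mathcal{C}_{3}/n$.

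For (\ref{4.0++}), the within-slab oscillation $|\langle M\rangle_t-\langle M\rangle_{k/n}|\le\mathcal{H}/n$ is pathwise, so it suffices to bound $\mathbb{E}(\max_k(\langle M\rangle_{k/n}-\langle N\rangle_k)^{2})$. Working with the natural filtration $\mathcal{F}^{M}_t$ of $M$, set $A_i=\langle M\rangle_{i/n}-\langle M\rangle_{(i-1)/n}$, $B_i=\mathbb{E}(A_i\mid\mathcal{F}^{M}_{(i-1)/n})$, and $P_i=\mathbb{E}((\Delta N_i)^{2}\mid\sigma\{N_0,\dots,N_{i-1}\})$, so that $\langle N\rangle_k=\sum_{i=1}^{k}P_i$ and $0\le A_i,B_i,P_i\le\mathcal{H}/n$. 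I would split
\begin{equation*}
\langle M\rangle_{k/n}-\langle N\rangle_k=\sum_{i=1}^{k}(A_i-B_i)+\sum_{i=1}^{k}(B_i-P_i)
\end{equation*}
and apply Doob's $L^{2}$-inequality to each sum, both of which I claim are $\mathcal{F}^{M}$-martingales. Since each increment is bounded by $\mathcal{H}/n$, orthogonality of increments then gives each piece an $L^{2}$-norm of order $1/\sqrt{n}$; squaring produces a bound of order $1/n$, which is even sharper than the stated $\mathcal{C}_{4}/n^{1/2}$.

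The one subtle step is identifying $\sum_{i=1}^{k}(B_i-P_i)$ as a martingale: $B_i$ is $\mathcal{F}^{M}_{(i-1)/n}$-measurable but not in general $\sigma\{N_0,\dots,N_{i-1}\}$-measurable, so this sum is not adapted to the natural filtration of $N$. My resolution is to observe that \emph{both} $N_k^{2}-\sum_{i=1}^{k}B_i$ (since $\mathbb{E}((\Delta N_k)^{2}\mid\mathcal{F}^{M}_{(k-1)/n})=B_k$) and $N_k^{2}-\langle N\rangle_k$ (a $\sigma\{N\}$-martingale, hence an $\mathcal{F}^{M}$-martingale) are $\mathcal{F}^{M}$-martingales, so their difference $\sum(B_i-P_i)$ is one too. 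Once this structural observation is in hand the rest is routine bookkeeping with BDG and Doob.
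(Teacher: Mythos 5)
Your treatment of (\ref{4.0+}) is correct and coincides with the paper's argument (slab-wise BDG, then bounding the maximum by the sum). The problem is in (\ref{4.0++}), at precisely the step you flagged as subtle. The claim that $N_k^2-\langle N\rangle_k$ is an $\mathcal{F}^M$-martingale is false in general: a martingale with respect to $\sigma\{N_0,\dots,N_k\}$ does not remain one when the filtration is \emph{enlarged} to $\mathcal{F}^M_{k/n}$ (the implication only goes the other way). Concretely, $\mathbb{E}\bigl(N_k^2-N_{k-1}^2-P_k\mid\mathcal{F}^M_{(k-1)/n}\bigr)=B_k-P_k$, and $P_k=\mathbb{E}\bigl(B_k\mid\sigma\{N_0,\dots,N_{k-1}\}\bigr)$ differs from $B_k$ whenever the conditional quadratic-variation increment given the full past of $M$ is not determined by the sampled values $N_0,\dots,N_{k-1}$ --- which is exactly the situation in the paper's application, where $d\langle M\rangle/dt$ is a general path-dependent volatility. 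There is also a structural reason the claim cannot hold: $S_k:=\sum_{i\le k}(B_i-P_i)$ is $\mathcal{F}^M_{(k-1)/n}$-measurable, so if it were an $(\mathcal{F}^M_{k/n})$-martingale it would be predictable, hence constant, forcing $B_k\equiv P_k$. Without the martingale property you lose orthogonality of the increments, and the pathwise bound $|S_k|\le\mathcal{H}$ gives nothing that decays in $n$.

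The decomposition can be repaired by inserting $(\Delta N_i)^2$ as an intermediate term: $B_i-P_i=\bigl(B_i-(\Delta N_i)^2\bigr)+\bigl((\Delta N_i)^2-P_i\bigr)$. The first sum is a genuine $\mathcal{F}^M$-martingale and the second a genuine $\sigma\{N\}$-martingale; each has increments whose second moments are $O(n^{-2})$ by BDG (they are no longer bounded by $\mathcal{H}/n$ pathwise, since $(\Delta N_i)^2$ is not), so Doob plus orthogonality applied to each piece \emph{separately, in its own filtration}, gives $\mathbb{E}(\max_k S_k^2)=O(1/n)$. Note that the second sum is exactly $\langle N\rangle_k-[N]_k$ with $[N]_k=\sum_{i\le k}(\Delta N_i)^2$ the optional variation, which is the paper's key object; the paper instead relates $[N]_k$ to $\langle M\rangle_{k/n}$ through the identity $[N]_k-\langle M\rangle_{k/n}=2\int_0^{k/n}(M_t-N_{[nt]})\,dM_t$ together with Doob and the It\^o isometry, obtaining the stated $O(n^{-1/2})$; the corrected conditioning argument sketched above would in fact give the sharper rate $O(n^{-1})$.
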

\begin{proof}
From the Burkholder--Davis--Gundy inequality it follows that
there exists a constant $c_1$ such that
\begin{eqnarray}\label{4.4}
&\mathbb{E}\left(\max_{0\leq k\leq n-1}\max_{k/n \leq t\leq (k+1)/n} |M_t-N_k|^4\right) \leq \\
&\sum_{k=0}^{n-1} \mathbb{E}\left(\max_{ k/n \leq t \leq (k+1)/n} |M_t-M_{k/n}|^4\right)\leq \nonumber\\
&c_1\sum_{k=0}^{n-1}\mathbb{E}\left(|\langle M \rangle_{(k+1)/n}-\langle M\rangle_{k/n}|^2\right)\leq
c_1 n \frac{\mathcal{H}^2}{n^2}=\frac{c_1 \mathcal{H}^2}{n}\nonumber
\end{eqnarray}
this completes the proof of (\ref{4.0+}). Next, we prove (\ref{4.0++}).
Define the optional variation of the martingale
${\{N_k\}}_{k=0}^n$ by $[N]_0=0$ and
\begin{equation}\label{4.5}
[N]_k=\sum_{i=1}^{k}(\Delta N_i)^2, \ \ 1\leq k\leq n.
\end{equation}
From the relation $\mathbb{E}(\Delta
[N]_k|\sigma\{N_0,...,N_{k-1}\})=\Delta \langle N\rangle_k$ and the
Doob--Kolmogorov inequality we obtain
\begin{eqnarray}\label{4.6}
&\mathbb{E}\left(\max_{0\leq k\leq n}|[N]_k-\langle N\rangle _k|^2\right)\leq 4 \mathbb{E}\left(|[N]_n-\langle N\rangle_n|^2\right)=\\
&4 \mathbb{E}\left(|\sum_{i=1}^n \Delta [N]_i-\Delta \langle N\rangle_i|^2\right)=
4\sum_{i=1}^n \mathbb{E}\left(|\Delta [N]_i-\Delta\langle N\rangle_i|^2\right)\leq\nonumber\\
&4\sum_{i=1}^n \mathbb{E}((\Delta [N]_i)^2)=4\sum_{i=1}^{n}
\mathbb{E}\left(|M_{i/n}-M_{(i-1)/n}|^4\right)\leq \frac{4c_1
\mathcal{H}^2}{n}\nonumber
\end{eqnarray}
where the last inequality follows from the Burkholder--Davis--Gundy
inequality. Next, observe that
\begin{equation}\label{4.9}
[N]_k=N^2_{k}-2\sum_{i=1}^{k-1} N_{i}(N_{i+1}-N_{i})=N^2_{k}-2\int_{0}^{k/n}N_{[nt]}dM_t, \ \
1\leq k\leq n.
\end{equation}
From the Doob--Kolmogorov inequality and Ito's Isometry we get
\begin{eqnarray}\label{4.10}
&\mathbb{E}\left(\sup_{0\leq u\leq 1}\left|\int_{0}^u(M_t-N_{[nt]})dM_t\right|^2\right)\leq \\
&4\mathbb{E}\left(|\int_{0}^1 (M_t-N_{[nt]})dM_t|^2\right)=4 \mathbb{E}\left(\int_{0}^1 (M_t-N_{[nt]})^2 d\langle M\rangle_t\right)\leq\nonumber\\
&4 \mathcal{H} \mathbb{E}\left(\max_{0\leq k\leq n-1}\max_{k/n\leq
t\leq (k+1)/n}|M_t-N_{k}|^2\right)\leq \frac{4
\mathcal{H}\sqrt{\mathcal{C}_3}}{\sqrt n}, \nonumber
\end{eqnarray}
the last inequality follows from
(\ref{4.0+}) and Jensen's inequality.
From (\ref{4.9})--(\ref{4.10}) and the equality
$2\int_{0}^{k/n} M_t dM_t=N^2_k-\langle M\rangle_{k/n}$
it follows that
$$\mathbb{E}\left(\max_{1\leq k\leq n}|[N]_k-\langle M\rangle_{k/n}|^2\right)\leq \frac{16 \mathcal{H}\sqrt{\mathcal{C}_3}}{\sqrt n}.$$
This together with (\ref{4.6}) and the inequality $(a+b+c)^2\leq 4(a^2+b^2+c^2)$ yields
\begin{eqnarray}\label{4.11}
&\mathbb{E}\left(\max_{0\leq k\leq n-1}\max_{k/n\leq t\leq (k+1)/n}|\langle M\rangle _t-\langle N\rangle_k |^2\right)\leq\\
&\frac{4\mathcal{H}^2}{n^2}+4\mathbb{E}\left(\max_{1\leq k\leq n}|[N]_k-\langle M\rangle_{k/n}|^2\right)+
4\mathbb{E}\left(\max_{1\leq k\leq n}|[N]_k-\langle N\rangle _k|^2\right)\nonumber\\
&\leq \frac{4\mathcal{H}^2}{n^2}+\frac{64
\mathcal{H}\sqrt{\mathcal{C}_3}}{\sqrt n}+\frac{16c_1
\mathcal{H}^2}{n}\nonumber
\end{eqnarray}
and the proof is completed.
\end{proof}
Next, we apply the above lemma in order to derive some estimates in our setup.
\begin{lem}\label{lem4.1}
Let $n\in\mathbb{N}$ and $P\in\mathcal{P}_{\textbf{D}}$. Consider
the $d$--dimensional martingale $N_k=B_{k/n}$, $0\leq k\leq n$
together with its predictable variation ${\{\langle N
\rangle_k\}}_{k=0}^n$, under the measure $P$. There exists a
constant $\mathcal{C}_5$ (which is independent of $n$ and $P$) such
that
\begin{equation}\label{4.12}
{E}_{P}\left(||\mathcal{W}_n(N)-B||^2\right)\leq \frac{\mathcal{C}_5}{\sqrt n}
\end{equation}
and
\begin{equation}\label{4.13}
{E}_{P}\left(||\mathcal{W}_n(\langle N\rangle)-\langle B\rangle||^2\right)\leq \frac{\mathcal{C}_5}{\sqrt n}.
\end{equation}
In the equations (\ref{4.12}) and (\ref{4.13}), $\mathcal{W}_n$
is the linear interpolation operator which is defined on the spaces $(\mathbb{R}^d)^{n+1}$ and
$(\mathbb{S}^d)^{n+1}$, respectively.
\end{lem}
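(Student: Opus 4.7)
The plan is to reduce the multidimensional statement to one-dimensional inputs supplied by Lemma \ref{lem4.0+}, and then invoke its bounds (\ref{4.0+}) and (\ref{4.0++}) componentwise. The key observation is that, under any $P\in\cP_{\textbf{D}}$, every scalar process of the form $B^i$ or $B^i+B^j$ is a one-dimensional continuous martingale whose quadratic-variation derivative is a.s.\ bounded by a constant $\mathcal{H}=\mathcal{H}(\textbf{D})$, because $\textbf{D}\subset\mathbb{S}^d_{+}$ is compact. This is the only $P$-dependent ingredient, and it is uniform in $P$.

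For (\ref{4.12}), since the sup-Euclidean norm on $\mathbb{R}^d$ is equivalent to the coordinatewise maximum (up to a factor depending only on $d$), it suffices to bound $E_P(\sup_t|\mathcal{W}_n(N^i)(t)-B^i_t|^2)$ for each $i$. On a subinterval $[k/n,(k+1)/n]$ the interpolant is a convex combination of $N^i_k=B^i_{k/n}$ and $N^i_{k+1}=B^i_{(k+1)/n}$, so the pointwise distance to $B^i_t$ is dominated by $2\max_{k/n\leq s\leq(k+1)/n}|B^i_s-B^i_{k/n}|$. Applying (\ref{4.0+}) to $M=B^i$ gives a fourth-moment bound of order $n^{-1}$ for this oscillation, and one use of Jensen's inequality ($\mathbb{E}X\leq\sqrt{\mathbb{E}X^2}$ applied to $X=\max|\cdot|^2$) converts it to the second-moment bound of order $n^{-1/2}$.

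For (\ref{4.13}), the matrix-valued problem is scalarized by polarization. The identity
\[
\langle B\rangle^{ij}_t=\tfrac{1}{2}\bigl(\langle B^i+B^j\rangle_t-\langle B^i\rangle_t-\langle B^j\rangle_t\bigr),
\]
together with its discrete counterpart for each entry of $\langle N\rangle_k$ (which polarizes the conditional second moments of $\Delta N^i$ and $\Delta N^j$), reduces each entry of $\langle B\rangle_t-\mathcal{W}_n(\langle N\rangle)(t)$ to a linear combination of scalar quantities $\langle X\rangle_t-\mathcal{W}_n(\langle X_{\cdot/n}\rangle^{\mathrm{pv}})(t)$, where $X\in\{B^i,B^i+B^j\}$ and $\langle\cdot\rangle^{\mathrm{pv}}$ denotes the scalar discrete predictable variation of the sample. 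For each such $X$ and $t\in[k/n,(k+1)/n]$, the interpolant is a convex combination of the discrete predictable variations at indices $k$ and $k+1$, so the distance to $\langle X\rangle_t$ is bounded by $\max(|\langle X\rangle_t-\langle X_{\cdot/n}\rangle^{\mathrm{pv}}_k|,|\langle X\rangle_t-\langle X_{\cdot/n}\rangle^{\mathrm{pv}}_{k+1}|)$. The first term is controlled directly by (\ref{4.0++}); the second by the triangle inequality together with $|\langle X\rangle_t-\langle X\rangle_{(k+1)/n}|\leq\mathcal{H}/n$ and one more application of (\ref{4.0++}). Summing the $O(d^2)$ entrywise contributions in the operator norm on $\mathbb{S}^d$ yields (\ref{4.13}).

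The arguments are routine once the polarization and the Jensen step are identified; the only thing to track is that all constants depend on $d$ and $\textbf{D}$ alone, which follows from the uniform bound $\sup_{A\in\textbf{D}}\|A\|<\infty$. I do not anticipate any serious obstacle beyond this bookkeeping and the polarization reduction, as the core probabilistic content is entirely carried by Lemma \ref{lem4.0+}.
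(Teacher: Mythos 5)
Your proposal is correct and follows essentially the same route as the paper: reduce (\ref{4.12}) to the componentwise oscillation bound (\ref{4.0+}) plus a Jensen step, and reduce (\ref{4.13}) to (\ref{4.0++}) via the polarization identity $\langle B\rangle^{i,j}=\tfrac12(\langle B^i+B^j\rangle-\langle B^i\rangle-\langle B^j\rangle)$ applied to the scalar martingales $B^i$ and $B^i+B^j$, whose volatilities are uniformly bounded by compactness of $\textbf{D}$. The only difference is that you spell out the convex-combination and triangle-inequality bookkeeping that the paper leaves implicit.
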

\begin{proof}
Inequality (\ref{4.12}) follows immediately from (\ref{4.0+}) and the relation
$$||\mathcal{W}_n(N)-B||\leq 2 \sum_{i=1}^d\max_{1\leq k\leq n}\max_{k/n\leq t\leq (k+1)/n}|N^i_k-B^i_{t}|.$$
Next, we prove (\ref{4.13}).
For any $1\leq i,j\leq d$ denote by
$\langle N\rangle^{i,j}_k$ and $\langle B\rangle^{i,j}_t$,
the $i-$th row and the $j-$th column of the
matrices $\langle N\rangle _k$ and $\langle B\rangle_t$, respectively.
Notice that
$\langle B\rangle^{i,j}_t=\frac{1}{2}(\langle B^i+B^j\rangle_t-\langle B^i\rangle_t-\langle B^j\rangle_t)$
and $\langle N\rangle^{i,j}_k=\frac{1}{2}(\langle N^i+N^j\rangle_k-\langle N^i\rangle_k-\langle N^j\rangle_k)$.
Thus (\ref{4.13}) follows from (\ref{4.0++}) and the inequality
$$||\mathcal{W}_n(\langle N\rangle)-\langle B\rangle||\leq
2\sum_{i=1}^d\sum_{j=1}^d\max_{0\leq k\leq n-1}\max_{k/n\leq t\leq
(k+1)/n}|\langle N\rangle^{i,j}_k-\langle B\rangle^{i,j}_t|.$$
\end{proof}
We conclude this section with the following technical lemma.
\begin{lem}\label{lem4.2}
Let $A>0$. Then we have:\\
i.
\begin{equation}\label{4.14}
\sup_{P\in\mathcal{P}_{\textbf{D}}}E_P\exp(A\sup_{0\leq t\leq 1}||B_t||)<\infty.
\end{equation}
ii. Let $n\in\mathbb{N}$ and $\nu$ be a distribution which satisfies (\ref{2.6})--(\ref{2.6+}).
Consider a filtered probability space $(\tilde\Omega,\tilde{\mathcal{F}},\{\tilde{\mathcal{F}}_k\}_{k=0}^n,\tilde{P})$
together with a sequence of i.i.d. random vectors $Y_1,...,Y_n$ which satisfy
$\mathcal{L}(Y_1)=\nu$. Assume that for any $i$, $Y_i$ is $\tilde{\mathcal{F}}_i$
measurable and independent of $\tilde{\mathcal{F}}_{i-1}$. Let ${\{M_k\}}_{k=0}^n$
be a $d$--dimensional stochastic process of the following form:
$M_0=0$ and
\begin{equation}\label{4.14+}
M_k=\sqrt\frac{1}{n}\sum_{i=1}^k\gamma_i Y_i, \ \ 1\leq k\leq n
\end{equation}
where for any $i$, $\gamma_i$ is $\tilde{\mathcal{F}}_{i-1}$
measurable random matrix, which takes values
in $\sqrt{\textbf{D}}$.
There exists a constant $\mathcal{C}_6$ (which may depend on $A$ and $\nu$)
such that
\begin{equation}\label{4.15}
\exp\left(A \max_{0\leq k\leq n}||M_k||\right)<\mathcal{C}_6.
\end{equation}
\end{lem}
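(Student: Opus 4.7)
My plan is to prove both parts by the same classical device: bound a convex functional of a martingale by applying Doob's maximal inequality to an associated exponential submartingale, and then control the terminal moment using the ambient structural hypothesis ($\mathbf{D}$ compact for (i), (\ref{2.6+}) for (ii)).

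For part (i), fix $P\in\mathcal{P}_{\textbf{D}}$. Compactness of $\mathbf{D}$ and the equivalence of norms on $\mathbb{S}^d$ furnish a constant $K_1<\infty$, independent of $P$, with $\langle B^i\rangle_t\le K_1$ for every $t\in[0,1]$ and every coordinate $i\le d$, $P$-a.s. Since $\|\cdot\|$ is the sup norm on $\mathbb{R}^d$, a union bound reduces the problem to bounding $E_P[\sup_t\exp(\pm AB_t^i)]$ for each $i$. The process $\exp(\pm AB_t^i/2)$ is a nonnegative continuous submartingale, so Doob's $L^2$ maximal inequality gives $E_P[\sup_t\exp(\pm AB_t^i)]\le 4\,E_P[\exp(\pm AB_1^i)]$. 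Finally, Novikov's condition is trivial thanks to $\langle B^i\rangle_1\le K_1$, so the stochastic exponential $\exp(\pm AB_t^i-\tfrac{A^2}{2}\langle B^i\rangle_t)$ is a genuine mean-one $P$-martingale, whence $E_P[\exp(\pm AB_1^i)]\le \exp(A^2K_1/2)$. The resulting bound depends only on $A$, $d$ and $\mathbf{D}$.

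For part (ii), the assumption (\ref{2.6}) together with the independence of $Y_k$ from $\tilde{\mathcal{F}}_{k-1}$ makes $\{M_k\}$ a true $\tilde{P}$-martingale, so $\exp(\lambda\cdot M_k)$ is a nonnegative submartingale for every $\lambda\in\mathbb{R}^d$. Exactly the same union-bound plus Doob's $L^2$ reduction as in (i) reduces the problem to uniformly bounding $\mathbb{E}\exp(\lambda\cdot M_n)$ for $\lambda$ in a fixed compact set. Conditioning iteratively on $\tilde{\mathcal{F}}_{k-1}$ and using that $Y_k\sim\nu$ is independent of the $\tilde{\mathcal{F}}_{k-1}$-measurable matrix $\gamma_k$ yields
$$\mathbb{E}\exp(\lambda\cdot M_n)=\mathbb{E}\prod_{j=1}^{n}\psi_\nu\!\bigl(\gamma_j^{T}\lambda/\sqrt n\bigr).$$
Since each $\gamma_j$ takes values in the compact set $\sqrt{\mathbf{D}}$, all the vectors $\gamma_j^{T}\lambda$ lie in the \emph{single} compact set $K':=\{\gamma^{T}\lambda:\gamma\in\sqrt{\mathbf{D}}\}\subset\mathbb{R}^d$. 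Hence the product is bounded pointwise by $\sup_{y\in K'}\psi_\nu^{n}(y/\sqrt n)$, which by hypothesis (\ref{2.6+}) is finite uniformly in $n$.

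The only subtlety is passing from the randomly-varying factors $\psi_\nu(\gamma_j^{T}\lambda/\sqrt n)$ to a deterministic bound uniform in the adapted choice of $\gamma_1,\ldots,\gamma_n$: the key point is that one and the same compact set $K'$ contains every $\gamma_j^{T}\lambda$, so the supremum can be taken inside each factor before multiplying, at which point (\ref{2.6+}) applies verbatim. Apart from that observation the proof consists of entirely standard exponential-martingale estimates, and I do not anticipate any real obstacle.
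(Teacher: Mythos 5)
Your argument is correct and essentially identical to the paper's: both parts reduce $\exp(A\max\|\cdot\|)$ to coordinatewise one--sided exponential moments, apply Doob's maximal inequality to the resulting nonnegative submartingales, and bound the terminal moment via the Novikov/stochastic--exponential identity with $\langle B^i\rangle_1\leq\|\textbf{D}\|$ in part (i), respectively via hypothesis (\ref{2.6+}) applied over the compact set of possible rows of $\gamma_j$ in part (ii); the paper combines coordinates by Cauchy--Schwarz where you use a union bound, which is immaterial. One cosmetic caveat: your displayed identity $\mathbb{E}\exp(\lambda\cdot M_n)=\mathbb{E}\prod_{j}\psi_{\nu}(\gamma_j^{T}\lambda/\sqrt n)$ is not literally an equality, since the random factors $\psi_{\nu}(\gamma_j^{T}\lambda/\sqrt n)$ do not telescope out of the iterated conditioning when the $\gamma_j$ are adapted; what the conditioning actually gives, and what you correctly describe in your final paragraph, is the inequality $\mathbb{E}\exp(\lambda\cdot M_n)\leq\sup_{y\in K'}\psi_{\nu}^{n}(y/\sqrt n)$ obtained by bounding each conditional factor by its deterministic supremum over $K'$ before integrating the next step, which is exactly the paper's estimate (\ref{4.17}).
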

\begin{proof}
i. Let $P\in\mathcal{P}_{\textbf{D}}$.
From the Novikov condition it follows that for any $1\leq i\leq d$ and $a\in\mathbb{R}$,
$E_P\exp\left(aB^i_1-\frac{a^2}{2}\langle B^i\rangle_1\right)=1$. Thus
$$E_P\left(\exp(a|B^i_1|)\right)\leq E_P(\exp(aB^i_1))+E_P(\exp(-aB^i_1))\leq 2
\exp\left(\frac{a^2}{2}||\textbf{D}||\right)$$
where $||\textbf{D}||=\sup_{\mathcal{D}\in\textbf{D}}||\mathcal{D}||$.
This together with the Cauchy--Schwartz
inequality completes the proof of (\ref{4.14}).\\
ii. Consider the compact set $K:=\{x\in\mathbb{R}^d: ||x||\leq
||\sqrt \textbf{D}||\}$. Clearly, the rows of the matrices
$\gamma_j$, $1\leq j\leq n$ are in $K$. Fix $1\leq i\leq d$ and
consider the $i-$th component of the process $M$, namely we consider
the process $(M^i_0,...,M^i_n)$. From (\ref{4.14+}) we get that for
any $a\in\mathbb{R}$
$$\mathbb{E}\left(\exp(a(M^i_k-M^i_{k-1}))|\tilde{\mathcal{F}}_{k-1}\right)\leq \sup_{y\in K}\psi_{\nu}\left(\frac{ay}{\sqrt n}\right)$$
where $\psi_{\nu}$ is the function which is defined below (\ref{2.6}).
This together with (\ref{2.6+}) gives
\begin{equation}\label{4.17}
\mathbb{E}\left(\exp(aM^i_n)\right)\leq \sup_{n\in\mathbb{N}}\sup_{y\in K}\psi^n_{\nu}\left(\frac{ay}{\sqrt n}\right)<\infty.
\end{equation}
From the inequality $\mathbb{E}\exp(|a M^i_n|)\leq
\mathbb{E}\left(\exp(aM^i_n)\right)+\mathbb{E}\left(\exp(-aM^i_n)\right)$
and the Cauchy--Schwartz inequality it follows that there exists a
constant $c_2$ (which may depend on $A$ and $\nu$) such that
\begin{equation}\label{4.17+-}
\mathbb{E}(\exp(A||M_n||))< c_2.
\end{equation}
Finally, since for any $i$ the process $M^i_k$, $k\leq n$ is a
martingale with respect to the filtration
$\{\tilde{\mathcal{F}}_k\}_{k=0}^n$ we conclude that the stochastic
process ${\{\exp(A||M_k||/2)\}}_{k=0}^n$ is a sub--martingale and
so, from (\ref{4.17+-}) and the Doob--Kolomogorv inequality
$\mathbb{E}\exp(A \max_{0\leq k\leq n}||M_k||)\leq 4 c_2$ and the
proof is completed.
\end{proof}

\section{Proof of the main results}\label{sec5}\setcounter{equation}{0}
In this section we complete the proof of Theorems \ref{thm2.1}--\ref{thm2.2}.
Let $\nu$ be a distribution which satisfies (\ref{2.6})--(\ref{2.6+}).
Fix $\epsilon>0$.
We start with proving the following statements
\begin{equation}\label{5.1}
V^{\nu}_n>V-\mathcal{C}_{\epsilon} n^{\epsilon-1/8}, \ \ \forall{n}\in\mathbb{N}
\end{equation}
and for a bounded $F$
\begin{equation}\label{5.1+}
V^{\nu}_n>V-\mathcal{C}n^{-1/8}, \ \ \forall{n}\in\mathbb{N}.
\end{equation}
Choose $n\in\mathbb{N}$ and $\delta>0$.
There exists a measure $Q\in \mathcal{P}_{\textbf{D}}$
for which
\begin{equation}\label{5.2}
V<\delta+E_QF(B,\langle B\rangle).
\end{equation}
Consider the stochastic process $N_k=B_{k/n}$, $0\leq k\leq n$
together with its predictable variation ${\{\langle
N\rangle_k\}}_{k=0}^n$. From the fact that $\textbf{D}$ is a convex
compact set we obtain that there exists a sequence of functions
$\psi_j:(\mathbb{R}^{d})^{j} \rightarrow\sqrt{\textbf{D}}$, $1\leq
j\leq n$ such that
\begin{eqnarray}\label{5.3-}
&\sqrt{\Delta\langle N\rangle_k}=\sqrt{\mathbb{E}\left(\Delta
N_k\Delta N_k^{'}\big|\sigma\{N_0,N_1,...,N_{k-1}\}\right)}=\\
& \frac{1}{\sqrt n}\psi_k(N_0,...,N_{k-1}), \ \ \forall{k} \ \
\mbox{a.s.}\nonumber
\end{eqnarray}
From the Burkholder--Davis--Gundy inequality it follows that there
exists a constant $c_3$ for which
\begin{equation}\label{5.3+}
{E}_Q\left(||\Delta N_k||^3\big|\sigma\{N_0,...,N_{k-1}\}\right)\leq
c_3n^{-3/2}, \ \  \forall{k} \ \ \mbox{a.s.}
\end{equation}
By applying (\ref{2.1}), Lemmas \ref{lem4.1}--\ref{lem4.2}
and Cauchy--Schwartz inequality we get
\begin{equation}\label{5.4}
E_Q|F(B,\langle B\rangle)-F(\mathcal{W}_n(N),\mathcal{W}_n(\langle
N\rangle))| \leq c_4 n^{-1/4}
\end{equation}
for some constant $c_4$ (which depends only on the distribution
$\nu$). From (\ref{5.3+}) and Lemma \ref{lem3.1} we obtain that
there exists a probability space
$(\tilde\Omega,\tilde{\mathcal{F}},\tilde{P})$ which contains the
martingale $N$, a sequence of i.i.d. random vectors $Y_1,...,Y_n$
and satisfies, $\mathcal{L}(Y_1)=\nu$, for any $k$ the random
vectors $N_1,...,N_{k-1}$ are independent of $Y_k$, and
\begin{equation}\label{5.5}
\tilde{P}\left(\max_{1\leq k\leq n}
||N_k-\sum_{j=1}^{k}\sqrt{\Delta\langle
N\rangle_j}Y_j>n^{-1/8}\right)<\frac{c_5n^{-3/2}n}  {n^{-3/8}}=c_5
n^{-1/8}
\end{equation}
for some constant $c_5$ which depends only on the distribution
$\nu$. Denote $M_k=\sum_{j=1}^{k}\sqrt{\Delta\langle
N\rangle_j}Y_j$, $1\leq k\leq n$ and $\mathcal{A}=\{\max_{1\leq
k\leq n} ||N_k-M_k||>n^{-1/8}\}.$ From (\ref{2.6}) and the fact that
$N_1,...,N_{k-1}$ are independent of $Y_k$ we obtain that $M$ is a
martingale, and $\langle M\rangle=\langle N\rangle$. Thus from
(\ref{2.1}), (\ref{5.5}), Lemma \ref{lem4.2}, the Markov inequality
and the Holder inequality (for $p=\frac{1}{1-8\epsilon}$ and
$q=\frac{1}{8\epsilon}$) we get that there exists constants
$c_6,c_7$ which depend on $\epsilon$ and $\nu$ such that
\begin{eqnarray}\label{5.5+}
&\tilde{E}|F(\mathcal{W}_n(N),\mathcal{W}_n(\langle N\rangle))-F(\mathcal{W}_n(M),\mathcal{W}_n(\langle M\rangle))|\leq  \\
&H_1 \tilde{E}\bigg(\exp\left(H_2(\max_{1\leq k\leq n}||M||_k+\max_{1\leq k\leq n}||N||_k+2||\textbf{D}||)\right)\times\nonumber\\
&\left(n^{-1/8}+ \mathbb{I}_{\mathcal{A}}(||\mathcal{W}_n(N)||+||\mathcal{W}_n(M)||)\right)\bigg) \leq\nonumber\\
&\leq c_6
(n^{-1/8}+\tilde{P}\mathcal(A)^{\frac{1}{1-8\epsilon}})\leq c_7
n^{\epsilon-1/8}\nonumber
\end{eqnarray}
where we set $\mathbb{I}_{\mathcal{A}}=1$ if an event $\mathcal{A}$
occurs and $\mathbb{I}_{\mathcal{A}}=0$ if not, and $\tilde{E}$
denotes the expectation with respect to $\tilde{P}$. If the function
$F$ is bounded, say $F\leq R$, then we have
\begin{eqnarray}\label{5.7}
&\tilde{E}|F(\mathcal{W}_n(N),\mathcal{W}_n(\langle N\rangle))-F(\mathcal{W}_n(M),\mathcal{W}_n(\langle M\rangle))|\leq R \tilde{P}(A)+H_1 n^{-1/8}  \\
& \times\tilde{E}\left(\exp(H_2(\max_{1\leq k\leq
n}||M||_k+\max_{1\leq k\leq n}||N||_k+2||\textbf{D}||))\right)\leq
c_8 n^{-1/8}\nonumber
\end{eqnarray}
for some constant $c_8$ which depends only on $\nu$. Since
$\delta>0$ was arbitrary, then in view of (\ref{5.2}), (\ref{5.4})
and (\ref{5.5+})--(\ref{5.7}) we conclude that in order to prove
(\ref{5.1})--(\ref{5.1+}) it remains to establish the following
inequality
\begin{equation}\label{5.8}
V^{\nu}_n\geq\tilde{E}F(\mathcal{W}_n(M),\mathcal{W}_n(\langle M\rangle)).
\end{equation}
Define a sequence of functions
$L_k:(\mathbb{R}^d)^{k+1}\times(\mathbb{S}^d)^{k+1}\rightarrow \mathbb{R},\, k=0,1,...,
n$ by the backward recursion
\begin{eqnarray}\label{5.9}
&L_n(u_0,...,u_n,v_0,...,v_n)=F\left(\mathcal{W}_n(u,v)\right) \  \ \mbox{and} \\
&L_k(u_0,...,u_k,v_0,...,v_k)=\nonumber\\
& \tilde{E} L_{k+1}\big(u_0,...,u_k,u_k+\frac{1}{\sqrt
n}\psi_{k+1}(u_0,...,u_k)
Y_{k+1},v_0,...,v_k,\nonumber\\
&v_k+\frac{1}{n}\psi^2_{k+1}(u_0,...,u_k)\big)\nonumber \ \
\mbox{for} \ \ k=0,1,...,n-1.\nonumber
\end{eqnarray}
From the fact that $Y_{k+1}$ is independent
of $Y_1,...,Y_k,N_1,...,N_{k-1}$ it follows (by backward induction) that for any $k$,
\begin{eqnarray}\label{5.10}
&\tilde{E}\left(F(\mathcal{W}_n(M),\mathcal{W}_n(\langle M\rangle))|\sigma\{N_1,...,N_{k-1},Y_1,...,Y_{k}\}\right)=\\
&L_k\left(M_0,...,M_k,\langle N\rangle_0,...,\langle N\rangle _k\right).\nonumber
\end{eqnarray}
Finally, from (\ref{2.11}), (\ref{5.9})--(\ref{5.10}) and the fact
that $\psi_k$ takes values in $\sqrt{\textbf{D}}$ for any $k$, we
obtain (by backward induction) that $L_k\leq J^{\nu,n}_k$, $k\leq
n$, and in particular
\begin{equation}\label{5.10+}
V^{\nu}_n=J^{\nu,n}_0(0,0)\geq L_0(0,0)=\tilde{E}F(\mathcal{W}_n(M),\mathcal{W}_n(\langle M\rangle)).
\end{equation}
This competes the proof of (\ref{5.1})--(\ref{5.1+}). Next, fix
$n\in\mathbb{N}$, a distribution $\nu$ which satisfies
(\ref{2.6})--(\ref{2.6+}) and consider the optimal control
$M^{\nu,n}$ which is given by (\ref{2.11})--(\ref{2.13}). By
applying Lemma \ref{lem3.1} for the standard normal distribution
$\nu_g$ it follows that there exists a probability space
$(\tilde\Omega,\tilde{\mathcal{F}},\tilde{P})$ which contains the
martingale $M^{\nu,n}$, a sequence of i.i.d. standard Gaussian
random vectors ($d$--dimensional) $Y^g_1,...,Y^g_n$ such that for
any $k$ the random vectors $M^{\nu,n}_1,...,M^{\nu,n}_{k-1}$ are
independent of $Y^g_k$, and
\begin{equation}\label{5.10+-}
\tilde{P}\left(\max_{1\leq k\leq n}
||M^{\nu,n}_k-\sum_{j=1}^k\sqrt{\Delta \langle M^{\nu,n}\rangle_j}
Y^g_j ||>n^{-1/8}\right)<c_{9} n^{-1/8}
\end{equation}
for some constant $c_{9}$. Denote
$\hat{M}_k=\sum_{j=1}^k\sqrt{\Delta \langle M^{\nu,n}\rangle_j}
Y^g_j$, $1\leq k\leq n$. Observe that $\langle \hat M\rangle=\langle
M^{\nu,n}\rangle$. Thus by using similar argument to those as in
(\ref{5.5+})--(\ref{5.7}) we obtain that there exists constants
$c_{10},c_{11}$ such that
\begin{eqnarray}\label{5.10++}
&|\tilde{E}F(\mathcal{W}_n(\hat M),\mathcal{W}_n(\langle \hat M\rangle))-V^{\nu}_n|\leq\\
&\tilde{E}|F(\mathcal{W}_n(\hat M),\mathcal{W}_n(\langle \hat
M\rangle))-F(\mathcal{W}_n(M^{\nu,n}),\mathcal{W}_n(\langle
M^{\nu,n}\rangle))|\leq c_{10} n^{\epsilon-1/8}\nonumber
\end{eqnarray}
and
if the function $F$ is bounded,
\begin{eqnarray}\label{5.10+++}
&|\tilde{E}F(\mathcal{W}_n(\hat M),\mathcal{W}_n(\langle \hat M\rangle))-V^{\nu}_n|\leq\\
&\tilde{E}|F(\mathcal{W}_n(\hat M),\mathcal{W}_n(\langle \hat M\rangle))-F(\mathcal{W}_n(M^{\nu,n}),\mathcal{W}_n(\langle M^{\nu,n}\rangle))|\leq
 c_{11} n^{-1/8}.\nonumber
\end{eqnarray}
By applying similar arguments to those as in (\ref{5.9})--(\ref{5.10+}) we conclude that
\begin{equation}\label{5.10++++}
V^{\nu_g}_n=J^{\nu_g,n}_0(0,0)\geq \tilde{E}F(\mathcal{W}_n(\hat M),\mathcal{W}_n(\langle \hat M\rangle)).
\end{equation}
Next, let
 $z_k:(\mathbb{R}^d)^k\rightarrow\sqrt{\textbf{D}}$,
$1\leq k\leq {n-1}$ be a sequence of functions such that for any
$1\leq k\leq n-1$, $z_k(Y^g_1,...,Y^g_k)=h^{\nu_g,n}_k
(M^{\nu_g,n}_0,...,M^{\nu_g,n}_k,N^{\nu_g,n}_0,...,N^{\nu_g,n}_k),$
where the terms $M^{\nu_g,n},N^{\nu_g,n}$ are given by
(\ref{2.11})--(\ref{2.13}). From the martingale representation
theorem if follows that the martingale $M_n$ which is defined by
(\ref{2.15}) equals to
\begin{eqnarray*}
&M^n_t=h^{\nu_g,n}_0(0,0)W_{t}+\mathbb{I}_{t>1/n}\times\\
&\int_{1/n}^t z_{[nu]}(\sqrt{n}W_{1/n},\sqrt{n}(W_{2/n}-W_{1/n}),...,\sqrt{n}(W_{[nu]}-W_{[nu]-1}))dW_u
, \ \ t\in [0,1] \nonumber
\end{eqnarray*}
and so we obtain that $P_n\in\mathcal{P}_{\textbf{D}}$.
As in (\ref{5.4})
we have
\begin{equation}\label{5.11}
E_n |F(B,\langle B\rangle)- F(\mathcal{W}_n(N),\mathcal{W}_n(\langle
N \rangle))|\leq c_4 n^{-1/4}
\end{equation}
where $N_k=B_{k/n}$, $0\leq k\leq n$.
Finally, observe that the distribution of $N$ under $P_n$
equals to the distribution of
the martingale $M^{\nu_g,n}$. Thus from (\ref{2.14}) and (\ref{5.11}) we conclude that
$$V\geq E_{P_n}F(B,\langle B\rangle)\geq V^{\nu_g}_n-c_4 n^{-1/4}.$$
This together with (\ref{5.1})--(\ref{5.1+}) and (\ref{5.10++})--(\ref{5.10++++})
completes the proof of Theorems \ref{thm2.1}--\ref{thm2.2}.
\qed


\begin{thebibliography}{Bow75}


\itemsep=\smallskipamount

\bibitem{DHP} L.Denis, M.Hu, and S.Peng.
{\em Function spaces and capacity related to a sublinear expectations: applications
to $G$--Brownian motion paths,} Potential Anal., {\bf 2} (2011) 139--161.

\bibitem{DM} L.Denis and C.Martini, {\em A Theoretical Framework for the Pricing of Contingent
Claims in the Presence of Model Uncertainty,}
Ann. Appl. Probab. {\bf 16} (2006), 827--852.

\bibitem{DM1} S.Deparis and C.Martini, {\em Superhedging Strategies and Balayage
in Discrete Time,} Proceedings of the 4th Ascona Conferecne
on Stochastic Analysis, Random Fields and Applications. {\bf 58} (2004),
205--219, Birkh\"{a}user.

\bibitem{DNS} Y.Dolinsky, M.Nutz and H.M.Soner,
{\em Weak Approximations of $G$--Expectations,}
submitted.

\bibitem{HP} Y.Hu and S.Peng, {\em Some Estimates for Martingale Representation
under $G$--Expectation,} preprint.


\bibitem{NS} M.Nutz and H.M.Soner, {\em Superhedging and Dynamic Risk Measures
under Volatility Uncertainty,} submitted.

\bibitem{P1} S.Peng, {\em G--expectation, G--Brownian motion
and related stochastic calculus of It\^{o} type,} Stochgastic
Analysis and Applications, volume 2 of Abel Symp., (2007), 541--567,
Springer Berlin.

\bibitem{P2} S.Peng, {\em Multi--dimensional $G$--Brownian motion and
related stochastic calculus under $G$--expectation.,}
Stochastic. Processes. Appl., {\bf 12} (2008), 2223--2253.

\bibitem{P3} S.Peng, {\em Nonlinear expectations and stochastic calculus
under uncertainty,} preprint.

\bibitem{P4} S.Peng {\em Tightness, weak compactness of nonlinear expectations
and application to CLT}, preprint.

\bibitem{S} A.I Sakhanenko, {\em A New Way to Obtain Estimates in the Invariance Principle,}
High Dimensional Probability II, (2000) 221--243.

\bibitem{STZ} H.M.Soner, N.Touzi, and J.Zhang,
{\em Martingale representation theorem for the $G$--expectation,}
Stochastic. Processes. Appl., {\bf 2} (2011), 265--287.

\end{thebibliography}
\end{document}